\theoremstyle{plain}
\numberwithin{equation}{section}
\newtheorem{definition}{Definition}[section]
\newtheorem{theorem}[definition]{Theorem}
\newtheorem{notation}[definition]{Notation}
\newtheorem*{theorem*}{Theorem}
\newtheorem{remark}[definition]{Remark}
\newtheorem*{remark*}{Remark}
\newtheorem*{sideremark*}{Side Remark}
\newtheorem*{claim*}{Claim}
\newtheorem*{lemma*}{Lemma}
\newtheorem*{q*}{Question}
\newtheorem{lemma}[definition]{Lemma}
\newtheorem{corollary}[definition]{Corollary}
\newtheorem*{corollary*}{Corollary}
\newtheorem{proposition}[definition]{Proposition}
\newcommand{\R}{\mathbb{R}}
\newcommand{\emb}{\hookrightarrow}
\newcommand{\p}{\partial}
\newcommand{\e}{\varepsilon}
\newcommand{\dd}{{\rm d}}
\newcommand{\E}{{\mathcal{E}}}
\newcommand{\mres}{\mathbin{\vrule height 1.6ex depth 0pt width
0.13ex\vrule height 0.13ex depth 0pt width 1.3ex}}
\def\XXint#1#2#3{{\setbox0=\hbox{$#1{#2#3}{\int}$ }
\vcenter{\hbox{$#2#3$ }}\kern-.6\wd0}}
\title{Characterisations for the depletion of reactant in a one-dimensional dynamic combustion model}
\author{Siran Li}
\address{Siran Li: School of Mathematical Sciences $\&$ CMA-Shanghai, Shanghai Jiao Tong University, No.~6 Science Buildings,
800 Dongchuan Road, Minhang District, Shanghai, China (200240)}
\email{\texttt{siran.li@sjtu.edu.cn}}
\author{Jianing Yang}
\address{Jianing Yang: School of Mathematical Sciences, Shanghai Jiao Tong University, No.~6 Science Buildings,
800 Dongchuan Road, Minhang District, Shanghai, China (200240)}
\email{\texttt{jnyang22@sjtu.edu.cn}}
\keywords{Compressible fluid; Navier--Stokes equations; dynamic combustion; reacting mixture; Fisher information.}
\subjclass[2020]{35B05, 35Q30, 35Q35, 75N10, 94A17}
\date{\today}
\begin{document}

\begin{abstract}
In this paper, a novel observation is made on a one-dimensional compressible Navier--Stokes model for the dynamic combustion of a reacting mixture of $\gamma$-law gases ($\gamma>1$) with discontinuous Arrhenius reaction rate function, on both bounded and unbounded domains. We show that the mass fraction of the reactant (denoted as $Z$) satisfies a weighted gradient estimate $Z_y\slash \sqrt{Z} \in L^\infty_t L^2_y$, provided that at time zero the density is Lipschitz continuous and bounded strictly away from zero and infinity. Consequently, the graph of $Z$ cannot  form cusps or corners  near the points where the reactant in the combustion process is completely depleted at any instant, and the entropy of $Z$ is bounded from above. 
The key ingredient of the proof is a new estimate based on the Fisher information, first exploited by \cite{method1, method2} with applications to PDEs in chemorepulsion and thermoelasticity. Along the way, we also establish a Lipschitz estimate for the density. 
\end{abstract}
\maketitle

\section{Introduction}\label{sec: intro}

\subsection{The model}\label{subsec: model, Euler}
This paper is devoted to the analysis of a one-dimensional (1D) compressible Navier--Stokes model for a reacting mixture in dynamic combustion with discontinuous reacting rate function. This system of partial differential equations (PDEs), written in the following form, was first studied by Chen \cite{chen} in 1992 and has been an active topic of research ever since (\emph{cf}. \cite{cd, cht, dz, dt1, dt2, li, cit1, cit2, cit3, cit4, cit5, cit6, cit7, cit8, cit9, cit10, cit11, cit12, cit13, wang}):
\begin{equation}\label{PDE Euler}
\begin{cases}
\rho_t + (\rho u)_x=0,\\
(\rho u)_t + (\rho u^2 + p)_x = (\mu u_x)_x,\\
(\rho E)_t + [u(\rho E+p)]_x = (\mu uu_x)_x + (\nu \theta_x)_x + (qD\rho Z_x)_x,\\
(\rho Z)_t + (\rho uZ)_x = -K\phi(\theta)\rho Z +(D\rho Z_x)_x.
\end{cases}
\end{equation}
See also  Williams \cite{wil}, Majda \cite{maj}, Glimm \cite{gli},  Collela--Majda--Roytburd \cite{cmr}, and many subsequent developments for mathematical models of dynamic combustion.

In Eq.~\eqref{PDE Euler}: $\rho$, $u$, $\theta$, $E$, and $Z$ denote respectively the density, velocity, temperature, total specific energy, and mass fraction of the reactant. The \emph{positive} constants $\mu$, $\nu$, $D$, and $K$ are the coefficients of bulk viscosity, heat conduction, diffusion, and rate of reactant, respectively. The total specific energy $E$ is given by
\begin{align}\label{E energy}
E = e + \frac{u^2}{2} + qZ,
\end{align}
where $e$ is the specific internal energy and $q>0$ is the difference between the heat of formation of the reactant and the product. For an ideal gas mixture with the same $\gamma$-law ($\gamma>1$), one has the constitutive relations:
\begin{align*}
e = \frac{p}{\rho(\gamma-1)},\qquad \theta = \frac{p}{\rho RM},
\end{align*}
where $R$ is the Boltzmann constant and $M$ is the molecular weight. We abbreviate $$a=RM$$ in the sequel, which is a positive constant.

The reaction rate function $\phi$ in Eq.~\eqref{PDE Euler} obeys \emph{Arrhenius' law}:
\begin{equation}\label{rate fn}
\phi(\theta) = \begin{cases}
\theta^\alpha e^{-A\slash \theta}\qquad \text{ for } \theta> \theta_{\rm ignite} > 0,\\
0\qquad \text{ for } \theta < \theta_{\rm ignite}.
\end{cases}
\end{equation}
Here, $\theta_{\rm ignite}$ is the ignition temperature at which the combustion reaction is initiated and $A>0$ is known as the activation energy. Note that $\phi \in L^\infty(]0,\infty[)$ with a jump discontinuity at  $\theta_{\rm ignite}$.

Consider as in \cite[Eqs.~(1.5)--(1.8)]{chen} the initial-boundary value problem (IBVP) of Eq.~\eqref{PDE Euler} in the space-time domain $[0,T] \times \Omega$ with impermeably insulated boundaries:
\begin{equation}\label{IBVP}
\begin{cases}
(\rho, u, \theta, Z)\big|_{t=0} = \big(\rho_0(x), u_0(x), \theta_0(x), Z_0(x)\big)\qquad \text{ for all } x \in \Omega,\\
\left(u, \theta_x, Z_x\right)\big|_{\p\Omega} = 0 \qquad \text{ for all } t \in [0,T],\\
0< m_0 \leq \rho_0(x),\, \theta_0(x) \leq M_0 < \infty\quad \text{and}\quad 0 \leq Z_0(x) \leq 1 \qquad \text{ for all } x \in \Omega,
\end{cases}
\end{equation}
where $m_0$ and $M_0$ are positive constants.

The initial-boundary data in Eq.~\eqref{IBVP} are subject to the natural compatibility conditions which, in the case of $\gamma$-law ideal gases, read as follows:
\begin{align}\label{compatibility}
\Big(u_0,\, (\theta_0)_x,\, (Z_0)_x,\, a \rho_0\theta_0 - \big[\mu (u_0)_x\big]_x\Big)\Big|_{\p\Omega} = {\bf 0}.
\end{align}

\begin{remark}
The arguments in this paper rely crucially on the \emph{Neumann} boundary condition for $Z$. It is also important to assume that the diffusion constant $D$ is strictly positive.
\end{remark}

We investigate PDEs~\eqref{PDE Euler} and \eqref{IBVP} (subject to Eq.~\eqref{compatibility}) on both bounded domain (without loss of generality, $\Omega = [0,1]$) and unbounded domain $\Omega = \R$.  In the latter case, the boundary conditions --- \emph{i.e.}, the second line in Eq.~\eqref{IBVP} --- are understood as the far-field conditions:
\begin{align}\label{far-field cond}
\lim_{|x| \to \infty}\left(u, \theta_x, Z_x\right) = 0 \qquad \text{ for all } t \in [0,T].
\end{align}


\subsection{Lagrangian coordinates}
As is often the case for 1D compressible fluid dynamics, the Lagrangian formulation of Eq.~\eqref{PDE Euler} proves to be convenient for PDE analysis. We shall reserve the symbol $(t,y)$ for the Lagrangian coordinates, while $(t,x)$ always designates the Eulerian coordinates as in \S\ref{subsec: model, Euler} above. They are related by
\begin{align*}
y = \int_{x(t)}^x \rho(t,s)\,\dd s\qquad &\text{where $x(t)$ is the particle}\\
&\text{trajectory obeying the ODE } x'(t) = u\big(x(t),t\big).
\end{align*}

Here and hereafter, denote by
\begin{align*}
v := \frac{1}{\rho}
\end{align*}
the specific volume of the gas mixture. Also, for simplicity, we take $x(t)=0$ and assume that $\int_{0}^{1}v_0(x)\,\mathrm{d}x=1.$

Eq.~\eqref{PDE Euler} in Lagrangian coordinates $(t,y)$ can be expressed as follows:
\begin{equation}
	\label{eq1}
		\left\{
		\begin{aligned}
			&v_t-u_y=0,\\
&u_t+\left(\frac{a\theta}{v}\right)_y=\left(\frac{\mu u_y}{v}\right)_y,\\
&\left(\theta+\frac{u^2}{2}\right)_t+\left(\frac{au\theta}{v}\right)_y=\left(\frac{\mu u u_y}{v}\right)_y+\left(\frac{\nu \theta_y}{v}\right)_y+qK\phi(\theta)Z,\\
&Z_t+K\phi(\theta)Z=\left(\frac{D}{v^2}Z_y\right)_y.
		\end{aligned}
		\right.
	\end{equation} 
This is a PDE system for $(v,u,\theta,Z)$ in $[0,T] \times \Omega$.

In \cite[\S 2, Theorem~1.1]{chen}, Chen established the equivalence of generalised solutions to Eqs.~\eqref{PDE Euler} and \eqref{eq1}, adapting the arguments in \cite{wag} by Wagner for 1D compressible Navier--Stokes equations. More precisely, it is proved that the transform from Eulerian to Lagrangian coordinates induces a one-to-one correspondence between the generalised solutions to Eqs.~\eqref{PDE Euler} and \eqref{eq1}, for which $\rho$ (or, equivalently, $v$) is essentially bounded away from zero and infinity. Therefore, throughout this paper, we shall work with Eq.~\eqref{eq1} in the Lagrangian coordinates, equipped with the initial-boundary conditions in Eq.~\eqref{IBVP}.

The notion of generalised solution in this work is defined as in \cite[p.611]{chen}.

\begin{definition}
	\label{defsolu}
	Let $\Omega=[0,1]$ or $]-\infty,\infty[$. For any given $T>0$, the quadruplet $$(v,u,\theta,Z): [0,T] \times \Omega \longrightarrow \R^4$$ is a generalised solution to the initial-boundary value problem~\eqref{eq1} $\&$ \eqref{IBVP} if
	
\begin{itemize}
\item
Eq.~\eqref{eq1} holds almost everywhere on $[0,T]\times\Omega$;
\item
the following estimates hold:
\begin{equation*}
		\left\{
		\begin{aligned}
			&(u,\theta-1,Z)\in L^{\infty}\big(0,T;H^{1}(\Omega)\big)\cap L^2\big(0,T;H^2(\Omega)\big),\\
			&(v_t, u_t,\theta_t,Z_t)\in L^2\big(0,T;L^2(\Omega)\big),\\
			& v^{-1}-1 \in L^\infty\big(0,T; H^1(\Omega)\big);
		\end{aligned}
		\right.
	\end{equation*}
	and
	\item
initial conditions in Eq.~\eqref{IBVP} hold in the sense of trace in the above function spaces.
\end{itemize}

\end{definition}
Throughout, for our PDE system in consideration, the initial-boundary conditions~\eqref{IBVP} are always assumed to satisfy the compatibility condition~\eqref{compatibility}.  

\subsection{Known results}
Various results concerning the well-posedness, stability, and asymptotic behaviour of the 1D dynamic combustion model, namely Eqs.~\eqref{eq1} $\&$  \eqref{IBVP}, have been established in the literature. 

In \cite{chen}, Chen proved the existence of generalised solutions on bounded domain $\Omega=[0,1]$, with quantitative bounds in Eq.~(3.33) therein. Moreover, large-time behaviour as follows was proved in \cite[Theorem~5(A)]{chen}:
\begin{align*}
\lim_{t \to \infty} \left\|\left( v-\int_0^1v_0(y)\,\dd y,\, u,\, \theta-\theta^\infty,\, Z-Z^\infty \right)\right\|_{H^1(\Omega)} = 0,
\end{align*}
with the constant asymptotic states $\theta^\infty$, $Z^\infty$ determined via
\begin{equation*}
\theta^\infty + qZ^\infty = \int_0^1\left\{ \theta_0(y) + qZ_0(y)+\frac{u_0^2(y)}{2} \right\}\,\dd y
\end{equation*}
and
\begin{equation*}
\phi\left(\theta^\infty\right)Z^\infty = 0.
\end{equation*}

More refined decay estimates have been obtained by Wang--Wen \cite{cit7} and Wang--Wu \cite{cit4}, among other works.  
The first named author in \cite{li} generalised the aforementioned existence and asymptotic results in \cite{chen} to the unbounded domain case $\Omega = \R$. Other types of boundary conditions have also been considered in \cite{chen, li, cit7, cit4}.

For closely related works, we refer to Feng--Zhang--Zhu \cite{cit9}, Feng--Hong--Zhu \cite{cit8}, Meng \cite{cit5}, Peng \cite{cit10}, Li--Peng \cite{cit1, cit2, cit6}, Xu--Feng \cite{cit12}, and Yin--Zhu \cite{cit3}, etc.~for asymptotic and stability analysis on the combustion model with different initial and/or boundary data, to Chen--Kratka \cite{ck}, Liao--Wang--Zhao \cite{cit13}, and Zhu \cite{cit11}, etc.~for studies on spherically symmetric reacting flows in multi-dimensions, and to Chen--Hoff--Trivisa \cite{cht} for analysis of a two-species model. 

The above list of references is by no means exhaustive.

\subsection{Main theorem and  consequences}

This note aims at providing a novel estimate for the mass fraction of reactant, \emph{i.e.}, the function $Z$ in Eq.~\eqref{PDE Euler} or Eq.~\eqref{eq1}. As a consequence, we show that $Z$ cannot be merely H\"{o}lder or Lipschitz near its nodal set (\emph{a.k.a.} zero locus) $Z^{-1}{\{0\}}$ at any instant, so the reactant cannot be depleted abruptly. This, to the best of the authors' knowledge, is among the very first refined characterisations of solutions to the 1D dynamic combustion model --- Eqs.~\eqref{eq1} $\&$  \eqref{IBVP} --- apart from those obtained via more conventional $L^2$-based energy estimates.

Our main theorem is as follows. 

\begin{theorem}\label{thm: main}
Let $Z$ be the mass fraction of the reactant in a generalised solution to the 1D dynamic combustion model Eqs.~\eqref{eq1} $\&$  \eqref{IBVP} in $[0,T]\times \Omega$, where $T>0$ and $\Omega = [0,1]$ or $\R$. Suppose the initial data satisfy, in addition to the conditions in Eq.~\eqref{IBVP}, that $$(v_0(y),u_0(y),\theta_0(y),Z_0(y))\in \left[H^{1}(\Omega)\right]^4 \quad\text{and}\quad v_0 \in W^{1,\infty}(\Omega).$$

Then we have the estimate
\begin{align*}
\sup_{t \in [0,T]} \left\{\int_\Omega \frac{\left|Z_y\right|^2}{Z} \,\dd y\right\} \leq \Lambda < \infty,
\end{align*}
where $\Lambda$ is a uniform constant as in Notation~\ref{notation} below.
\end{theorem}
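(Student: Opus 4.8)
The plan is to derive a differential inequality for the quantity $F(t) := \int_\Omega |Z_y|^2 / Z \, \dd y$ (the Fisher information of $Z$, up to normalisation) and to control its time derivative using the $Z$-equation in Eq.~\eqref{eq1}, the Neumann boundary condition, and a Lipschitz-in-space bound on $v$. First I would set up a regularisation: since $Z$ may touch zero, I work with $F_\e(t) := \int_\Omega |Z_y|^2/(Z+\e)\,\dd y$ for $\e>0$, establish an $\e$-uniform bound, and then send $\e\to0$ by Fatou. Differentiating $F_\e$ in time and substituting $Z_t = -K\phi(\theta)Z + (D v^{-2} Z_y)_y$ from the fourth equation, I integrate by parts in $y$; the Neumann condition $Z_y|_{\p\Omega}=0$ (respectively the far-field condition when $\Omega=\R$) kills all boundary terms. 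This produces a ``good'' term, essentially $-\!\int D v^{-2}\, (\text{second-order expression in } Z_y/\sqrt{Z+\e})^2$, coming from the diffusion, plus ``error'' terms involving $v_y$ (from differentiating $v^{-2}$), the reaction term $-K\phi(\theta)$, and the lower-order piece of the expansion.

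The key algebraic identity I expect to use is the standard Fisher-information computation: writing $w = \sqrt{Z+\e}$, one has $Z_y/(Z+\e)^{1/2}$ expressions that let the diffusion term be rewritten, after integration by parts, so that the leading contribution is manifestly dissipative. Concretely, $\frac{\dd}{\dd t} F_\e$ will contain $-c\int v^{-2} |w_{yy}|^2$-type dissipation competing against terms bounded by $\|v_y\|_{L^\infty} \int v^{-3} |w_y|^2 |w_{yy}|$ and $K\|\phi\|_{L^\infty}\int |w_y|^2$. The first is absorbed by the dissipation via Young's inequality once $\|v_y\|_{L^\infty}$ is known to be finite; the second is directly $\lesssim F_\e$, feeding Grönwall. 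This is exactly the mechanism that \cite{method1,method2} exploit, and the discontinuity of $\phi$ causes no trouble since only $\|\phi\|_{L^\infty}$ enters — no derivative of $\phi$ is ever needed, which is the whole point of working with the Fisher information rather than $\int |Z_y|^2$.

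The main obstacle, and the reason for the extra hypothesis $v_0 \in W^{1,\infty}(\Omega)$, is propagating a pointwise Lipschitz bound $\|v_y(t,\cdot)\|_{L^\infty} \le C(T)$ for all $t\in[0,T]$. From Definition~\ref{defsolu} one only has $v^{-1}-1 \in L^\infty_t H^1_y$, which in 1D embeds into $L^\infty_t C^{0,1/2}_y$ but not into Lipschitz. I would obtain the Lipschitz estimate as a separate lemma (this is the ``Lipschitz estimate for the density'' advertised in the abstract): from $v_t = u_y$ and the effective-viscous-flux structure of the momentum equation $u_t + (a\theta/v)_y = (\mu u_y/v)_y$, one derives an equation for $v_y$ (or for $(\log v)_y$, or for the effective flux $\mu u_y/v - a\theta/v$) and runs a Grönwall argument in $L^\infty_y$, using the already-known bounds $0<\underline m \le v \le \overline M$, $\theta \in L^\infty_t H^1_y \hookrightarrow L^\infty_{t,y}$, and $u_y \in L^2_t H^1_y$ from Chen's a priori estimates \cite{chen} (and \cite{li} for $\Omega=\R$). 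With $\|v_y\|_{L^\infty_{t,y}}$ in hand, the Grönwall inequality for $F_\e$ closes with an $\e$-independent constant $\Lambda$ depending only on $T$, $\underline m$, $\overline M$, $\|\phi\|_{L^\infty}$, the structural constants, and the initial data; $F(t) = \lim_{\e\to0}\uparrow F_\e(t) \le \Lambda$ follows by monotone convergence, completing the proof. One technical point to handle carefully is justifying the differentiation of $F_\e$ and the integrations by parts at the level of regularity in Definition~\ref{defsolu}; this is done by a further mollification in $y$ (or by a density argument using that $Z \in L^2_t H^2_y$ and $Z_t \in L^2_t L^2_y$), with all limits passing since every term is uniformly controlled.
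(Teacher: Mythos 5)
Your overall architecture coincides with the paper's: shift the variable ($X=Z+\delta$, i.e.\ your $Z+\e$), differentiate the weighted functional $\int X_y^2/X\,\dd y$, integrate by parts using the Neumann/far-field condition, absorb the terms carrying $(D/v^2)_y$ by means of a separately proved spacetime Lipschitz bound for $v$, close by Gr\"{o}nwall, and pass to the limit. Your sketch of the Lipschitz lemma for $v$ is also sound in outline and essentially parallel to the paper's Theorem~\ref{thm: estimate on vy} (the paper differentiates Kazhikhov's representation formula; your effective-viscous-flux route, integrating $(\log v)_{yt}=\frac{1}{\mu}\left[u_t+(a\theta/v)_y\right]$ in time and using $\theta\in L^2_tH^2_y$, is a standard equivalent device). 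However, there is a genuine gap at the core step: the reaction term. You claim its contribution is bounded by $K\|\phi\|_{L^\infty}\int |w_y|^2\lesssim F_\e$. That bound is not available. Because $\phi$ is discontinuous (and its regularisation only satisfies $\|\Phi_\e\|_{C^1}\le \e^{-1}$, see Eq.~\eqref{regularised Phi}), you cannot expand $\partial_y\big(\phi(\theta)Z\big)$; you must move the derivative off the $\phi$-factor by integration by parts, and then the reaction contributes a term of the form $4K\int \phi(\theta)\big(w-\e/w\big)w_{yy}\,\dd y$ with $w=\sqrt{Z+\e}$ --- genuinely second order and with no sign. Absorbing it into the only available dissipation, the weighted Fisher term $\int \frac{D}{v^2}X\left[(\log X)_{yy}\right]^2\dd y$, requires precisely the nontrivial inequality $\int\left[(X^{1/2})_{yy}\right]^2\le \frac{13}{8}\int X\left[(\log X)_{yy}\right]^2$ (Lemma~\ref{diffineq}, taken from \cite{method1} and resting on a Bernis/Winkler-type inequality). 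This is the key lemma of the paper; your proposal neither invokes it nor offers a substitute, and without it the Gr\"{o}nwall inequality does not close.

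A related bookkeeping point: describing the good term as ``$-c\int v^{-2}|w_{yy}|^2$'' and the $v_y$-error as $\|v_y\|_{L^\infty}\int v^{-3}|w_y|^2|w_{yy}|$ is not innocuous. Young's inequality applied to that cross term produces the quartic quantity $\int v^{-4}w_y^4$, which is again controlled only through second-derivative quantities via the Bernis-type inequality, whose constant is not small; with the variable weight $v^{-2}$ pinched only between $\Lambda^{-1}$ and $\Lambda$, the absorption into $\int v^{-2}w_{yy}^2$ need not close. The paper's arrangement avoids this trap: the quartic piece is never split off, the full Fisher dissipation $\int\frac{D}{v^2}X\left[(\log X)_{yy}\right]^2$ is kept as the good term, and the only places where Lemma~\ref{diffineq} or Young's inequality are used come with an arbitrarily small parameter $\eta$ (see the treatment of the terms $[{\rm BAD}]$ and $I_3$ in \S\ref{sec: gradient estimate on bd}). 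Finally, a minor procedural difference: the paper performs the whole computation on the classical solutions of the $\Phi_\e$-regularised system (Proposition~\ref{th1}) and then sends $\e,\delta\searrow 0$, rather than mollifying the generalised solution; if you work directly at the regularity of Definition~\ref{defsolu} you still need the regularisation of $\phi$ to make sense of the manipulations uniformly, so this step should be made explicit.
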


\begin{notation}\label{notation}
We reserve the symbol $\Lambda$ for positive constants depending only on the physical parameters $a$, $q$, $m_0$, $M_0$, $D$, $K$, $\mu$, and $\nu$; the lifespan $T$; the $L^\infty$-norm of the discontinuous reaction rate function $\phi$; as well as the natural norm of the initial data $$\left\|\big(v_0, u_0, \theta_0, Z_0\big)\right\|_{[H^1(\Omega) \cap W^{1,\infty}(\Omega)] \times [H^1(\Omega)]^3}.$$ It may change from line to line.
\end{notation}

By Definition~\ref{defsolu}, $Z \in L^\infty\big(0,T; H^1(\Omega)\big)$ and $Z_t \in L^2\big(0,T; L^2(\Omega)\big)$ whenever $(v,u,\theta,Z)$ is a generalised solution.  Theorem~\ref{thm: main} does not provide new information on  $\{Z \geq b_0\}$ for positive definite $b_0>0$. It instead gives refined characterisations for $Z$ near its nodal set $Z^{-1}\{0\}$; \emph{i.e.}, where the reactant of the dynamic combustion is depleted. This explains the title of the paper.

The study of $Z^{-1}\{0\}$ is an interesting topic and appears by now elusive in the literature. In contrast, the other \emph{a priori} non-negative variables $v$ and $\theta$ are shown to be strictly positive. More precisely, it is proved that $$c \leq \theta, \,v\leq C \text{ for strictly positive uniform constants } 0<c\leq C< \infty.$$ See \cite[Lemmata~3, 4, and 6]{chen} and \cite[Theorem~3.1, Lemma~5.1, and Eq.~(5.25)]{li}. In physical terms, the gas mixture in dynamic combustion cannot develop vacua or concentrations, and the temperature remains finite and cannot approach absolute zero. For $Z$, it has been shown by maximum principle arguments that $0 \leq Z \leq 1$ (\emph{cf}. \cite[Lemma~2]{chen} and \cite[Lemma~2.2]{li}. Strictly speaking, these inequalities only hold almost everywhere;  see however Remark~\ref{rem: cont rep} below), but we have had no knowledge about the set of degeneracy  $Z^{-1}\{0\}$.

\begin{remark}\label{rem: cont rep}
By virtue of the Aubin--Lions lemma, $Z$ equals almost everywhere to a function $$\tilde{Z} \in \bigcap_{\gamma \in [0,1[}C^0\big([0,T]; H^{\gamma}(\Omega)\big).$$ In the sequel, we say that $\tilde{Z}$ is a \emph{continuous (in-time) representative} of $Z$ and identify $\tilde{Z}\equiv Z$ without relabelling. 
\end{remark}

Under the above provision, we readily deduce the following from Theorem~\ref{thm: main}.

\begin{corollary}\label{cor}
Near any $y_\star \in Z^{-1}{\{0\}}$ and at \underline{any} time $t \in [0,T]$, the function $Z$ cannot be locally of the form
\begin{align*}
Z(t,y) = C\left|y-y_\star\right|^\beta + {\rm l.o.t.} \qquad \text{for any } C \in \R, \, \beta \in [0,1].
\end{align*}
Here ${\rm l.o.t.}$ denotes a (smooth) perturbation of magnitude $\sim \mathfrak{o}\left(\left|y-y_\star\right|^\beta\right)$ as $y \to y_\star$. The constants $C$ and $\beta$ and the term ${\rm l.o.t.}$ may depend on time.
\end{corollary}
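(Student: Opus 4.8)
The plan is to derive Corollary~\ref{cor} directly from the weighted gradient bound in Theorem~\ref{thm: main} by a contradiction argument: if $Z$ had the claimed form $Z(t,y) = C|y-y_\star|^\beta + \mathrm{l.o.t.}$ with $\beta\in[0,1]$ near a zero $y_\star$, then the quantity $\int_\Omega |Z_y|^2/Z\,\dd y$ would diverge, violating $\Lambda<\infty$. Before running the contradiction I would dispose of trivial cases. If $C=0$ or $\beta=0$ then $Z$ would not vanish at $y_\star$ (for $\beta=0$ the leading term is the nonzero constant $C$, contradicting $y_\star\in Z^{-1}\{0\}$ unless $C=0$, in which case $Z\equiv\mathrm{l.o.t.}$ and the statement is vacuous since there is no genuine power-type behaviour), so we may assume $C\neq 0$ and $\beta\in(0,1]$.

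First I would compute the pointwise behaviour of the integrand near $y_\star$. With $Z(t,y)=C|y-y_\star|^\beta(1+o(1))$ and, after differentiating the asymptotic expansion (the $\mathrm{l.o.t.}$ being a smooth perturbation of order $\mathfrak o(|y-y_\star|^\beta)$, hence with derivative $\mathfrak o(|y-y_\star|^{\beta-1})$), $Z_y(t,y)=C\beta\,\mathrm{sgn}(y-y_\star)|y-y_\star|^{\beta-1}(1+o(1))$, one gets
\begin{align*}
\frac{|Z_y(t,y)|^2}{Z(t,y)} \;=\; \frac{C^2\beta^2|y-y_\star|^{2\beta-2}(1+o(1))}{C|y-y_\star|^\beta(1+o(1))} \;=\; C\beta^2\,|y-y_\star|^{\beta-2}\,(1+o(1)).
\end{align*}
(One must of course check $C>0$ here — which is forced since $Z\geq 0$ and $Z$ is continuous with $Z(t,y)>0$ for $y$ near but not equal to $y_\star$, so $C|y-y_\star|^\beta>0$ forces $C>0$.) Then I would integrate over a small punctured neighbourhood: since $\beta\le 1$ we have $\beta-2\le-1$, so $\int_0^\delta r^{\beta-2}\,\dd r=+\infty$, and therefore $\int_\Omega |Z_y|^2/Z\,\dd y=+\infty$ at that time $t$, contradicting the finite bound $\Lambda$ from Theorem~\ref{thm: main}. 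Since Theorem~\ref{thm: main} holds at every $t\in[0,T]$ (the supremum over $t$ is finite), the obstruction applies at every instant, which is exactly the assertion "at any time".

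The only genuinely delicate point — and the one I would be most careful about — is the passage from the informal expansion $Z=C|y-y_\star|^\beta+\mathrm{l.o.t.}$ to a rigorous statement that $|Z_y|^2/Z\gtrsim |y-y_\star|^{\beta-2}$ on a set of positive measure near $y_\star$. The issue is that "$\mathrm{l.o.t.}$" is only controlled in size, not automatically in its derivative; one needs the stated qualifier that it is a \emph{smooth} perturbation of magnitude $\mathfrak o(|y-y_\star|^\beta)$ so that term-by-term differentiation is legitimate and the derivative of the remainder is genuinely lower order. If one instead only knows $Z\in H^1$, the cleanest route is to argue: for $Z$ of the given form, $\sqrt{Z}$ behaves like $\sqrt{C}\,|y-y_\star|^{\beta/2}$, whose weak derivative $(\sqrt Z)_y = Z_y/(2\sqrt Z)$ behaves like $|y-y_\star|^{\beta/2-1}$, which fails to be in $L^2_{\mathrm{loc}}$ precisely when $\beta/2-1\le -1/2$, i.e. $\beta\le 1$; since $Z_y/\sqrt Z = 2(\sqrt Z)_y$, this is the same as the divergence of $\int |Z_y|^2/Z$. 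Presented this way, the corollary reads as the sharp statement that $Z_y/\sqrt Z\in L^\infty_t L^2_y$ \emph{excludes} exactly the H\"older/Lipschitz-type cusps with exponent $\beta\in[0,1]$, while leaving room for $\beta\in(1,2]$ (e.g. the nondegenerate quadratic touch $\beta=2$), which is consistent with and explains the dichotomy emphasised in the introduction.
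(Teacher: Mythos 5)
Your argument is correct and follows essentially the same route as the paper's own proof: both reduce to the local computation $|Z_y|^2\slash Z \sim \beta^2\,|y-y_\star|^{\beta-2}$, which fails to be in $L^1_{\rm loc}$ near $y_\star$ for every $\beta\in[0,1]$ (with the trivial cases $C=0$, $\beta=0$ excluded by $Z(t,y_\star)=0$ and $Z\geq 0$), contradicting the finite bound of Theorem~\ref{thm: main} at the given instant. The only difference is cosmetic: the paper takes $C=1$ and ${\rm l.o.t.}=0$ without comment, whereas you explicitly flag that the perturbation's derivative must be of order $\mathfrak{o}\left(|y-y_\star|^{\beta-1}\right)$ for the expansion of $Z_y$ to be legitimate --- a point the paper leaves implicit in the phrase ``smooth perturbation''.
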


In view of Definition~\ref{defsolu} of the generalised solution, one has $Z \in L^2\big(0,T; H^2(\Omega)\big)$, which implies by Sobolev--Morrey embedding that $Z \in L^2\big(0,T; C^{\frac{3}{2}}(\Omega)\big)$. This of course shows that $Z(t, \bullet) \in  C^{\frac{3}{2}}(\Omega)$ for \underline{almost every} $t \in [0,T]$. Nonetheless, it leaves open the possibility that $Z|_{\mathcal{N} \times \Omega}$ exhibits rather singular behaviours on some Lebesgue null set $\mathcal{N} \subset [0,T]$. ($\mathcal{N}$ does not even need to be discrete here; for example, it may be some fat Cantor sets.) 

In this context, Corollary~\ref{cor} rules out, \underline{at any instant}, the local geometries of the graph of $Z$ near its nodal set as shown in Figure~\ref{figure}, where $y_0, y_1, y_2 \in Z^{-1}\{0\}$. In other words, the graph of $Z$ --- understood as its continuous representative as before --- cannot instantaneously develop cusps or corners at any time on $[0,T]$.

\begin{figure}[H]\caption{Inadmissible shapes of $Z$, the mass fraction of reactant}\label{figure}
	\centering
	\subfigure[]{
		\begin{minipage}{6cm}
		    \centering        
		    \includegraphics[scale=0.7]{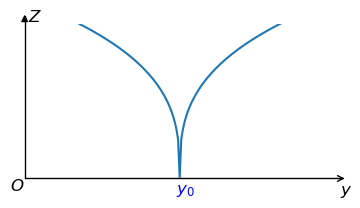}
		\end{minipage}
    }
    \subfigure[]{
    	\begin{minipage}{6cm}
    		\centering        
    		\includegraphics[scale=0.7]{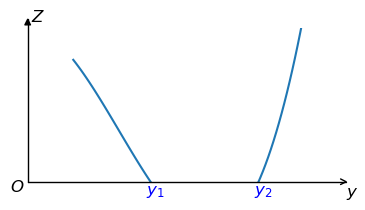}
    	\end{minipage}
    }

\end{figure}

\begin{proof}[Proof of Corollary~\ref{cor}]
The arguments below are local --- \emph{i.e.}, only pertaining to arbitrarily small neighbourhoods of $Z^{-1}{\{0\}}$. So we may treat the cases $\Omega = [0,1]$ and $\R$ at one stroke. 

If $\beta \in [-0,1[$, then $Z=\left|y-y_\star\right|^\beta$ (without loss generality, assuming $C=1$ and ${\rm l.o.t.}=0$ here) leads to ${Z_y^2}\slash{Z}=\beta^2\left|y-y_\star\right|^{\beta-2}$, which is not in $L^1$ locally about $y_\star$.

If $\beta =1$ and $Z=\left|y-y_\star\right|$, then $Z_y^2 = 1$ on $\Omega \setminus \{y_\star\}$, so almost everywhere $Z_y^2 \slash Z = \left|y-y_\star\right|^{-1}$, which again is not in $L^1_{\rm loc}(\Omega)$.  \end{proof}

Another consequence of Theorem~\ref{thm: main} is the boundedness of \emph{entropy}:
\begin{corollary}\label{cor: entropy}
Let $Z$ be the mass fraction of the reactant in a generalised solution to the 1D dynamic combustion model Eqs.~\eqref{eq1} $\&$  \eqref{IBVP} in $[0,T]\times \Omega$, where $T>0$ and $\Omega = [0,1]$. Then, the entropy of $Z$ with respect to the Lebesgue measure on $[0,1]$, defined by
\begin{align*}
{\rm Ent}_{\mathcal{L}^1\mres [0,1]}(Z) := \int_0^1 Z\log Z\,\dd y - \left(\int_0^1Z\,\dd y\right) \log\left(\int_0^1 Z\,\dd y\right),
\end{align*}
is bounded from above on $[0,T]$ by a uniform constant, depending on the same parameters as $\Lambda$ in Theorem~\ref{thm: main}. 
\end{corollary}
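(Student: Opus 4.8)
\textbf{Proof proposal for Corollary~\ref{cor: entropy}.}

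The plan is to bound the entropy functional using Theorem~\ref{thm: main} together with the elementary $L^\infty$ bound $0 \le Z \le 1$ (valid for the continuous representative, \emph{cf}.~Remark~\ref{rem: cont rep}). First I would observe that because $0 \le Z \le 1$ and $\int_0^1 Z\,\dd y \le 1$, the second term $-\left(\int_0^1 Z\,\dd y\right)\log\left(\int_0^1 Z\,\dd y\right)$ is non-negative but bounded above by $1/e$; so it contributes only a harmless additive constant and the real task is to bound $\int_0^1 Z\log Z\,\dd y$ from above. Since $Z\log Z \le 0$ pointwise on $[0,1]$, one trivially has $\int_0^1 Z\log Z\,\dd y \le 0$, which already yields an upper bound. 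Thus the only genuinely non-trivial assertion is that the entropy does not diverge to $-\infty$ in a way that would make the statement vacuous --- but an \emph{upper} bound is all that is claimed, so the above suffices at face value.

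Reading the statement more charitably (the authors surely want a \emph{meaningful} two-sided control, or at least an upper bound that uses the new estimate), I would instead connect the entropy to the Fisher information $\int_0^1 |Z_y|^2/Z\,\dd y \le \Lambda$ via a logarithmic Sobolev-type inequality on the bounded interval $[0,1]$. The cleanest route: set $w := \sqrt{Z}$, so that $\int_0^1 |w_y|^2\,\dd y = \tfrac14\int_0^1 |Z_y|^2/Z\,\dd y \le \Lambda/4$ by Theorem~\ref{thm: main}, and $\int_0^1 w^2\,\dd y = \int_0^1 Z\,\dd y =: \mu \le 1$. Then
\begin{align*}
{\rm Ent}_{\mathcal{L}^1\mres[0,1]}(Z) = \int_0^1 w^2\log\!\left(\frac{w^2}{\mu}\right)\dd y,
\end{align*}
which is exactly the entropy appearing in the Euclidean/interval logarithmic Sobolev inequality for the probability density $w^2/\mu$. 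Applying the Gross-type log-Sobolev inequality on $[0,1]$ (or, equivalently, combining the Sobolev embedding $H^1([0,1])\hookrightarrow L^\infty$ with the trivial bound $w^2\log(w^2/\mu)\le C_\delta\, w^{2+\delta}$ for any $\delta>0$, then $\|w\|_{L^\infty}^\delta \le \Lambda$) gives
\begin{align*}
{\rm Ent}_{\mathcal{L}^1\mres[0,1]}(Z) \le C\left(\int_0^1 |w_y|^2\,\dd y + \int_0^1 w^2\,\dd y\right) \le \Lambda,
\end{align*}
with $\Lambda$ depending on the same parameters, as claimed. I would carry this out in three steps: (i) reduce to bounding $\int Z\log Z$ and rewrite it via $w=\sqrt Z$; (ii) invoke $\|Z_y/\sqrt Z\|_{L^2}\le\Lambda^{1/2}$ from Theorem~\ref{thm: main} to bound $\|w_y\|_{L^2}$, and $0\le Z\le1$ to bound $\|w\|_{L^2}$; (iii) apply the log-Sobolev / $H^1\hookrightarrow L^\infty$ argument to close.

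The main obstacle is purely a matter of which inequality to cite and how to handle the normalisation: the interval $[0,1]$ has finite measure, $Z$ need not integrate to one, and $Z\log Z$ changes sign, so one must be slightly careful that the log-Sobolev inequality is applied to a genuine probability density and that the resulting constant absorbs the factor $\mu^{-1}$ cleanly (it does, since $-\log\mu \ge 0$ only helps the upper bound). If one prefers to avoid quoting a log-Sobolev inequality altogether, the fallback via Sobolev embedding $H^1([0,1])\hookrightarrow C^{0}([0,1])$ — which bounds $\|\sqrt Z\|_{L^\infty}$ hence $\|Z\|_{L^\infty}$ (recovering $Z\le$ const, consistent with $Z\le1$) and thus $\int_0^1 Z|\log Z|\,\dd y \le \|Z\|_{L^\infty}\int_0^1|\log Z|\,\dd y$ — requires instead an integrability bound on $\log Z$ near the nodal set, which again follows from $Z_y/\sqrt Z\in L^2$ by writing $(\log Z)_y = Z_y/Z = (Z_y/\sqrt Z)\cdot Z^{-1/2}$ and is the one spot where the new estimate is essential rather than cosmetic. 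Either way the corollary follows; I expect the log-Sobolev route to be the shortest to write.
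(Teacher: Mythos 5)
Your main (log-Sobolev) route is essentially the paper's own proof: the paper simply quotes the logarithmic Sobolev inequality ${\rm Ent}_{\mathcal{L}^1\mres[0,1]}(Z)\leq c_0\int_0^1 Z_y^2\slash Z\,\dd y$ from Bakry--Gentil--Ledoux \cite{bgl} and combines it with Theorem~\ref{thm: main}, which is exactly what your reduction via $w=\sqrt{Z}$ amounts to; your preliminary remark that $0\leq Z\leq 1$ already yields the crude upper bound ${\rm Ent}\leq 1/e$ is also correct. One caution about the closing ``fallback'': $Z_y/\sqrt{Z}\in L^2$ gives no lower bound on $Z$, so it does not yield integrability of $|\log Z|$ (which can fail, e.g.\ if $Z$ vanishes on a set of positive measure) --- but this aside is not needed for, and does not affect, your primary argument.
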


Note that ${\rm Ent}_{\mathcal{L}^1\mres [0,1]}(Z) \geq 0$ by convexity; here we understand $0 \log 0 = 0$. For the unbounded domain case $\Omega = \R$, one may replace the entropy by the following variant:
\begin{align*}
{\rm Ent}_{\gamma}(Z) := \int_0^1 Z\log Z\,\dd \gamma(y) - \left(\int_0^1Z\,\dd \gamma(y)\right) \log\left(\int_0^1 Z\,\dd \gamma(y)\right),
\end{align*}
where $\gamma$ is the Gaussian probability measure $\dd \gamma(y) = (2\pi)^{-1/2} \exp\{-y^2/2\}\,\dd y$ on $\R$.

\begin{proof}[Proof of Corollary~\ref{cor: entropy}]
It follows immediately from Theorem~\ref{thm: main} and the Logarithmic Sobolev inequality $${\rm Ent}_{\mathcal{L}^1\mres [0,1]}(Z) \leq c_0 \int_0^1 \frac{Z_y^2}{Z}\,\dd y.$$ Here $c_0$ is a universal constant determined, for example, from the curvature-dimension condition of $\mathcal{L}^1\mres [0,1]$ on $\R$. See Bakry--Gentil--Ledoux \cite[\S 5.1]{bgl}.  
\end{proof} 

The key ingredient of the proof for our main Theorem~\ref{thm: main} is an inequality based on Fisher information. See Lemma~\ref{diffineq}. This inequality was proved in the recent preprint \cite{method1} on a 3D chemorepulsion system by Cie\'{s}lak--Fuest--Hajduk--Sier\.{z}\c{e}ga, and later applied to 1D thermoelasticity PDE models by Bies--Cie\'{s}lak  in \cite{method2}. In addition, careful uniform \emph{a priori} estimates on the generalised solution $(v,u, \theta, Z)$ are performed to deal with the discontinuity of the reaction rate function $\phi$.

\subsection{Organisation}
The remaining sections of the paper are organised as follows. 

In \S\ref{sec: energy}, we collect several uniform \emph{a priori} bounds for the generalised solution, which are crucial for later developments. Then, in \S\ref{sec: Fisher} we introduce the Fisher information-based inequality. This is the key tool for this paper. \S\S\ref{sec: gradient estimate on bd} $\&$ \ref{sec: gradient estimate on ubd} are devoted to the proof of the main Theorem~\ref{thm: main}, in the cases of $\Omega = [0,1]$ and $\Omega = \R$, respectively.

\section{Preliminary estimates}\label{sec: energy}

In this section, we collect from the existing literature (especially Chen \cite{chen}; Jiang \cite{j1}) several uniform estimates for the dynamic combustion model Eqs.~\eqref{eq1} $\&$ \eqref{IBVP}, which are used to establish  existence results --- summarised as Proposition~\ref{GST} below --- for generalised solutions. We also fix some notations that will be used throughout this work.

\begin{proposition}
	\label{GST}
	Suppose that the initial data 
	$$(v_0(y),u_0(y),\theta_0(y),Z_0(y))\in \left[H^{1}(\Omega)\right]^4$$
satisfy Eqs.~\eqref{IBVP} $\&$ \eqref{compatibility}. There exists a generalised solution $(v,u,\theta,Z)$ to Eqs.~\eqref{eq1} $\&$ \eqref{IBVP}. 
\end{proposition}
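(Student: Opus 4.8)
The plan is to follow the now-classical approximation scheme of Chen \cite{chen} for $\Omega=[0,1]$ and of the first named author \cite{li} for $\Omega=\R$, which we sketch for completeness. First I would regularise the discontinuous reaction rate: for $\e>0$ replace $\phi$ by a smooth, non-negative $\phi_\e$ with $\phi_\e\to\phi$ pointwise on $]0,\infty[\,\setminus\{\theta_{\rm ignite}\}$ and $\|\phi_\e\|_{L^\infty}\leq\|\phi\|_{L^\infty}$ uniformly in $\e$. For the resulting smooth system, local-in-time existence of a solution $(v^\e,u^\e,\theta^\e,Z^\e)$ in the class of Definition~\ref{defsolu} follows from a standard linearisation-plus-Banach-fixed-point argument (or a Galerkin scheme), using the $H^1$-regularity and the compatibility conditions~\eqref{compatibility} of the data; the strict positivity $v_0,\theta_0\geq m_0$ propagates for a short time, and the comparison principle applied to the fourth equation in \eqref{eq1} (whose source term $-K\phi_\e(\theta^\e)Z^\e\leq 0$ when $Z^\e\geq 0$, together with the Neumann condition and $D>0$) keeps $0\leq Z^\e\leq 1$.

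Next I would close the a priori estimates, uniform in $\e$ and in $t$, that allow the local solution to be continued to the whole of $[0,T]$. The order I would follow is: (i) conservation of total mass $\int_\Omega v^\e=\int_\Omega v_0$ and the basic energy identity from the third equation in \eqref{eq1}, using $0\leq Z^\e\leq 1$, giving an $L^\infty_tL^1_y$ bound on $\theta^\e+(u^\e)^2/2$; (ii) two-sided pointwise bounds $c\leq v^\e\leq C$ from the Kazhikhov-type representation formula for $v^\e$ in terms of $\int\theta^\e/v^\e$, as in \cite[Lemmata~3--4]{chen}; (iii) two-sided pointwise bounds $c\leq\theta^\e\leq C$ from a maximum-principle/Gronwall argument on the temperature equation, as in \cite[Lemma~6]{chen} and \cite{j1}; (iv) the $H^1$-in-space and $L^2_tH^2_y$ estimates for $(u^\e,\theta^\e-1,Z^\e)$ and the $L^2_tL^2_y$ estimates for the time derivatives, as in \cite[\S3]{chen} and \cite[\S\S4--5]{li}. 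The reaction rate enters all of these only through the bounded factor $\phi_\e(\theta^\e)$ with $\|\phi_\e\|_{L^\infty}$ controlled, so every bound is uniform in $\e$; a standard continuation argument then produces the regularised solution on all of $[0,T]$.

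Finally I would pass to the limit $\e\to 0$. The uniform bounds give, along a subsequence, weak-$*$ convergence $(v^\e,u^\e,\theta^\e,Z^\e)\weak(v,u,\theta,Z)$ in the spaces of Definition~\ref{defsolu}, and, by the Aubin--Lions lemma applied to the time-derivative bounds, strong convergence of $v^\e,u^\e,\theta^\e,Z^\e$ in $C^0([0,T];L^2)\cap L^2([0,T];H^1)$, hence a.e. convergence on $[0,T]\times\Omega$. The quadratic nonlinearities $a\theta^\e/v^\e$, $\mu u^\e_y/v^\e$, etc., pass to the limit by the usual weak-times-strong argument using $c\leq v^\e\leq C$. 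The delicate point, which I expect to be the main obstacle, is the convergence of the reaction term $\phi_\e(\theta^\e)Z^\e\to\phi(\theta)Z$ across the jump of $\phi$; this is handled as in \cite[\S2]{chen} by showing that the limit satisfies $\big|\{(t,y):\theta(t,y)=\theta_{\rm ignite}\}\big|=0$ --- so that $\phi$ is continuous along $\theta$ Lebesgue-a.e. on $[0,T]\times\Omega$ and $\phi_\e(\theta^\e)\to\phi(\theta)$ a.e. --- which follows from $\theta\in L^2_tH^2_y$ together with the temperature equation ruling out $\theta\equiv\theta_{\rm ignite}$ on a positive-measure set; combined with the strong convergence of $Z^\e$ and the uniform domination $0\leq\phi_\e(\theta^\e)Z^\e\leq\|\phi\|_{L^\infty}$, dominated convergence transfers the nonlinearity. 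One then checks that $(v,u,\theta,Z)$ solves \eqref{eq1} a.e., lies in the class of Definition~\ref{defsolu}, and attains the data \eqref{IBVP} in the trace sense, with \eqref{compatibility} ensuring the regularity up to $t=0$. For $\Omega=\R$ one inserts, before the $\e$-limit, an exhaustion step: solve on $\Omega_L=[-L,L]$ with the boundary conditions of \eqref{IBVP}, derive $L$-uniform versions of (i)--(iv) as in \cite[\S\S3--5]{li}, and let $L\to\infty$.
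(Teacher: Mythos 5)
Your proposal follows essentially the same route as the paper, which proves Proposition~\ref{GST} by invoking the scheme of Chen \cite{chen} and Li \cite{li}: the $C^1$-regularisation $\Phi_\epsilon$ of the discontinuous rate $\phi$, the $\epsilon$-uniform bounds of Proposition~\ref{th1} (mass/energy identities, Kazhikhov-type representation and two-sided bounds for $v$ and $\theta$, and the $H^1$ and $L^2_tH^2_y$ estimates), and a compactness passage $\epsilon\searrow 0$ --- exactly the architecture you sketch, including the identification of the reaction term $\Phi_\epsilon(\theta^\epsilon)Z^\epsilon$ across the jump at $\theta_{\rm ignite}$ as the delicate point. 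The only minor deviations are that the paper builds the regularised approximations as classical solutions in parabolic H\"{o}lder spaces via linearisation and Schauder estimates (Proposition~\ref{th1}) rather than by an $H^1$ fixed-point/Galerkin argument, and that the treatment of the set $\{\theta=\theta_{\rm ignite}\}$ is deferred to \cite{chen} rather than justified by your (rather heuristic) claim that the temperature equation forces this level set to be null --- note that a.e.\ on that set one only gets a pointwise relation among $u_y$, $Z$, and constants, not a contradiction, so that sub-step should simply be cited, as the paper does.
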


\begin{remark}\label{rem: lip}
For the existence of generalised solutions, one does not need to assume the stronger condition $v_0 \in W^{1,\infty}(\Omega)$  as in the main Theorem~\ref{thm: main}. $H^1$-regularity will suffice.
\end{remark}

\subsection{Regularised system}\label{subsec: reg}
The proof of Proposition~\ref{GST} in \cite{chen, li} relies on the $C^1$-regularisation of the discontinuous reaction rate function $\phi(\bullet)$ defined in Eq.~\eqref{rate fn}, which shall be described as follows for convenience of the readers.

Define the \emph{nondecreasing} function
\begin{align}\label{regularised Phi}
&\Phi_{\epsilon}(\theta)=\left\{
	\begin{aligned}
		&\theta^{\alpha}e^{-A/\theta}\qquad \text{ for }  \theta\geq \theta_{\rm ignite}+\epsilon,\\
		&0\qquad \text{ for }  \theta\leq \theta_{\rm ignite}-\epsilon,
	\end{aligned}
	\right. \qquad\text{and}\qquad \Vert \Phi_{\epsilon}\Vert_{C^1}\leq \frac{1}{\epsilon}.
\end{align} 
One obtains uniform estimates independently of the regularisation parameter $\e>0$, and then passes to limits in suitable Sobolev spaces by sending $\e \searrow 0$. 

The aforementioned uniform estimates are naturally derived in the (parabolic) H\"{o}lder framework. We adopt the following notations from \cite{chen}. 
Let $Q_T=[0,T]\times \Omega$ and set
$$\left\{\begin{aligned}
	&B^{\sigma/2,\sigma}(Q_T)=\left\{u\in C^0(Q_T):\sup_{Q_T}|u|+\sup_{t\neq s,x\neq y}\frac{|u(t,x)-u(s,y)|}{|t-s|^{\sigma/2}+|x-y|^{\sigma}}<\infty\right\}, \\
    &B^{1+\sigma}(Q_T)=\left\{u\in B^{\sigma/2,\sigma}(Q_T):u_t,u_y\in B^{\sigma/2,\sigma}(Q_T)\right\},\\
    &B^{2+\sigma}(Q_T)=\left\{u\in B^{\sigma/2,\sigma}(Q_T):u_t,u_y,u_{yy}\in B^{\sigma/2,\sigma}(Q_T)\right\}.
\end{aligned}\right. $$
We denote the norms $\Vert\bullet\Vert_{1+\sigma}:=\Vert\bullet\Vert_{B^{1+\sigma}}$ and $\Vert\bullet\Vert_{2+\sigma}:=\Vert\bullet\Vert_{B^{2+\sigma}}$, where  $T>0$ is suppressed.

In the case $\Omega = [0,1]$, it is established in \cite{chen} by linearisation and classical Schauder estimates that, for regular initial data, there exists a unique classical solution in parabolic H\"{o}lder spaces. The analogue on $\Omega= \R$ is obtained in \cite{li}. The crucial ingredients of the arguments therein are adapted from Jiang \cite{j1, j2} (which, in turn, rely on the localisation lemma by Kazhikhov--Shelukhin \cite{ks, ks2}) and Li--Liang \cite{ll16}.

\begin{proposition}[Theorem~3 in \cite{chen} and Theorem~1.2 in \cite{li}]
	\label{th1}
Let $\Omega = [0,1]$ or $\R$. Suppose that for some $\alpha \in ]0,1]$ the initial data $(v_0, u_0, \theta_0, Z_0)$ satisfy the conditions in Eq.~\eqref{IBVP} and
	\begin{equation}
		\label{inire}
		v_0\in C^{1,\alpha}(\Omega),\quad (u_0,\theta_0,Z_0)\in C^{2,\alpha}(\Omega),\quad
		\Vert \left(u_0,\theta_0,Z_0\right)\Vert_{H^1(\Omega)}\leq M_1.
	\end{equation}
Consider the system~\eqref{eq1} $\&$ \eqref{IBVP}, in which the discontinuous reaction rate function $\phi$ is replaced by its $C^1$-regularisation $\Phi_\e$ defined in Eq.~\eqref{regularised Phi}. Then for aribitrary $T>0$, there exists a unique classical solution $$\left(v^\e,u^\e,\theta^\e,Z^\e\right)\in B^{1+\sigma}(Q_T)\times \big[B^{2+\sigma}(Q_T)\big]^3$$ for some exponent $\sigma \in ]0,1]$ depending on $\alpha$ and $M_1$.

In addition, the following estimates hold for all $(t,y) \in Q_T$:
	\begin{equation*}
		\left\{
		\begin{aligned}
			&C_0^{-1}\leq v^\e\leq C_0,\quad 0<C^{-1}\leq \theta^\e\leq C_0,\quad |u^\e|\leq C_0,\quad 0\leq Z^\e\leq 1,\\
			&\left\| \left(v^\e_t,v^\e_y\right)\right\|^2(t)+\int_{0}^{t}\left\| \left(v^\e_t,v^\e_y\right)\right\|^2(\tau)\,\mathrm{d}\tau\leq C_0,\\
			&\left\|\left( u^\e_y,\theta^\e_y,Z^\e_y\right)\right\|^2(t)+\int_{0}^{t}\left\|\left( u^\e_{yy},\theta^\e_{yy},Z^\e_{yy},u^\e_y,\theta^\e_y,Z^\e_y,u^\e_t,\theta^\e_t,Z^\e_t\right)\right\|^2(\tau)\,\mathrm{d}\tau\leq C_0,\\
			&\Vert v^\e\Vert_{1+\sigma} + \left\|\left( u^\e,\theta^\e,Z^\e\right)\right\|_{2+\sigma}\leq C.
		\end{aligned}\right.
	\end{equation*} 
Here $C_0$ depends only on $M_1$, while $C$ depends only on $M_1$, $\epsilon$, and $T$.
\end{proposition}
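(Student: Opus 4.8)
The plan is to reproduce, at the level of a sketch, the constructions of Chen \cite{chen} (for $\Omega=[0,1]$) and of the first named author \cite{li} (for $\Omega=\R$). Since for each fixed $\e>0$ the regularised reaction term $\Phi_\e$ is $C^1$, the system \eqref{eq1} is a genuinely quasilinear parabolic system for $(u,\theta,Z)$ coupled to the transport relation $v_t=u_y$, $v|_{t=0}=v_0$. I would proceed in three stages: (i) local-in-time existence and uniqueness of a classical solution, by linearisation and a contraction-mapping argument in the parabolic H\"older spaces $B^{2+\sigma}$; (ii) a hierarchy of \emph{a priori} estimates — first the pointwise bounds on $v$ and $\theta$ and the $L^2$-type energy estimates, whose constants depend only on $M_1$, the fixed physical parameters, and $T$, hence are uniform in $\e$; then the higher-order $B^{2+\sigma}$ bounds, whose constants are permitted to degenerate as $\e\searrow 0$ through $\|\Phi_\e\|_{C^1}\le\e^{-1}$; and (iii) a continuation argument upgrading the local solution to one on all of $Q_T$, for arbitrary $T$, using that the \emph{a priori} bounds do not deteriorate in time.

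\textbf{Local existence.} Fixing $\e$, I would first, given a candidate velocity $u$, recover $v$ from the ODE $v_t=u_y$, i.e.\ $v(t,y)=v_0(y)+\int_0^t u_y(\tau,y)\,\dd\tau$; for a short time $T_0$ this keeps $m_0/2\le v\le 2M_0$, so the equations for $(u,\theta,Z)$ become uniformly parabolic with coefficients controlled in $B^{\sigma/2,\sigma}(Q_{T_0})$. Then I would define a solution operator sending $(u,\theta,Z)$ to $(\tilde u,\tilde\theta,\tilde Z)$ by solving the associated \emph{linear} parabolic initial-boundary value problems with the homogeneous data in \eqref{IBVP}. Classical linear Schauder theory, together with the compatibility conditions \eqref{compatibility} and the hypotheses $v_0\in C^{1,\alpha}$, $(u_0,\theta_0,Z_0)\in C^{2,\alpha}$ in \eqref{inire}, shows that this operator maps a closed ball of $[B^{2+\sigma}(Q_{T_0})]^3$ into itself and is a contraction there once $T_0=T_0(M_1,\e)$ is small enough; the Banach fixed-point theorem then yields a local classical solution, and uniqueness follows from the contraction estimate (or a direct Gronwall estimate on the difference of two solutions). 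On $\R$ one works on weighted or localised versions of these spaces, following \cite{li,ll16}.

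\textbf{A priori estimates, the $\e$-independent part.} Here I would first record the conservation laws: mass is normalised ($\int v\,\dd y\equiv 1$ on $[0,1]$, with the analogous far-field normalisation on $\R$) and the total energy $\int(\theta+u^2/2+qZ)\,\dd y$ is conserved (suitably renormalised on $\R$), which yields $L^1$-control of $\theta$ and $L^2$-control of $u$; note $0\le\Phi_\e\le\|\phi\|_{L^\infty(]0,\infty[)}$ uniformly for $\e\le1$, so the combustion source $qK\Phi_\e(\theta)Z$ is a benign perturbation. The crux is the pair of pointwise bounds $0<C_0^{-1}\le v\le C_0$: following Kazhikhov--Shelukhin \cite{ks,ks2} as adapted by Jiang \cite{j1,j2} and in \cite{chen,li}, I would integrate the momentum equation in $y$ to obtain a representation of $v$ (equivalently of $\log v$, via $(\log v)_t=u_y/v$ and the effective-flux structure of the $u$-equation), and then invoke the Kazhikhov--Shelukhin localisation lemma to bound $v$ away from $0$ and $\infty$. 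The pointwise bounds $0<C_0^{-1}\le\theta\le C_0$ then follow from the temperature equation by maximum-principle/Gronwall arguments in the style of Jiang \cite{j1} (the lower bound being helped by the fact that the reaction only adds heat), and $0\le Z\le 1$ is immediate from the maximum principle applied to the $Z$-equation, using $\Phi_\e\ge0$ and the homogeneous Neumann condition on $Z_y$. With these pointwise bounds available, testing the $u$-, $\theta$- and $Z$-equations against $-u_{yy}$, $-\theta_{yy}$, $-Z_{yy}$ (and against $u_t,\theta_t,Z_t$), integrating by parts against the boundary conditions, and closing by Cauchy--Schwarz and Gronwall, produces
\[ \sup_{t}\big\|(u_y,\theta_y,Z_y)\big\|^2(t)+\int_0^T\big\|(u_{yy},\theta_{yy},Z_{yy},u_y,\theta_y,Z_y,u_t,\theta_t,Z_t)\big\|^2(\tau)\,\dd\tau\le C_0. \]
For $v$, the relation $v_t=u_y$ gives $\|v_t\|_{L^2}=\|u_y\|_{L^2}$, while differentiating it in $y$ yields $v_y(t)=v_0'+\int_0^t u_{yy}(\tau)\,\dd\tau$ and hence $\sup_t\|v_y\|^2(t)+\int_0^T\|(v_t,v_y)\|^2(\tau)\,\dd\tau\le C_0$. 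All constants in this stage depend only on $M_1$, $T$ and the fixed parameters.

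\textbf{H\"older bootstrap, continuation, and the main obstacle.} With the $L^2$-bounds in hand, the one-dimensional embeddings $H^1\hookrightarrow C^{0,1/2}$, $H^2\hookrightarrow C^{1,1/2}$ in $y$, together with the Aubin--Lions lemma in $t$, give $B^{\sigma/2,\sigma}$-bounds on $u,\theta,Z$ and H\"older bounds on their first $y$-derivatives; consequently the equations for $u,\theta,Z$ are linear parabolic equations whose coefficients (built from $v,\theta$) are H\"older and bounded both above and below, so classical Schauder estimates — now invoking $\|\Phi_\e\|_{C^1}\le\e^{-1}$ — deliver $\|(u,\theta,Z)\|_{2+\sigma}\le C(M_1,\e,T)$, and integrating $v_{ty}=u_{yy}$ in time with $v_0'\in C^\alpha$ gives $v_y\in B^{\sigma/2,\sigma}$, i.e.\ $\|v\|_{1+\sigma}\le C$. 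Finally, since none of the \emph{a priori} estimates deteriorate on $[0,T]$, a standard continuation argument extends the local solution from $[0,T_0]$ to all of $[0,T]$: the norms that control the local existence time stay bounded, so the solution persists. The hardest part is the pair of pointwise bounds $0<C_0^{-1}\le v\le C_0$ and $0<C_0^{-1}\le\theta\le C_0$, uniformly in $\e$ — these are not formal energy inequalities but rest on the Kazhikhov--Shelukhin representation of $v$ through the momentum balance and the accompanying localisation lemma, together with a careful accounting of the combustion source terms in the temperature equation; once they are secured, the $H^1$/$H^2$ energy estimates and the Schauder bootstrap are comparatively routine.
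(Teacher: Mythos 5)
This proposition is not proved in the paper at all: it is quoted from Chen \cite{chen} (Theorem~3) and \cite{li} (Theorem~1.2), and the paper's only ``proof'' is the surrounding remark that those works proceed by linearisation plus classical Schauder estimates, with the pointwise bounds on $v$ and $\theta$ obtained via Jiang's adaptation of the Kazhikhov--Shelukhin representation/localisation lemma. Your sketch follows exactly that route (local existence by contraction in $B^{2+\sigma}$, $\e$-uniform pointwise and $H^1$/$H^2$ energy bounds, maximum principle for $0\le Z\le 1$, then an $\e$-dependent H\"older bootstrap and continuation), so in substance it matches the cited proofs.

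Two small caveats relative to the literal statement. First, the proposition asserts that $C_0$ depends \emph{only} on $M_1$ --- in particular not on $T$, which is how Chen's estimates are used for large-time asymptotics --- whereas your Gr\"onwall-based closing of the energy estimates explicitly produces constants depending on $T$; obtaining the time-uniform version requires the more careful dissipation bookkeeping (exponential factors in the Kazhikhov-type representation, as in Jiang \cite{j1}) rather than a crude Gr\"onwall argument. This weaker, $T$-dependent form is all the present paper actually needs, since its constants $\Lambda$ are allowed to depend on $T$, but it is not quite the statement as written. Second, you claim a lower bound for $\theta$ uniform in $\e$, while the statement only records $\theta^\e\ge C^{-1}$ with $C$ possibly $\e$-dependent; your claim is stronger than required (and plausible, since $\Phi_\e\ge 0$ only adds heat), so this is not a defect, just an overstatement relative to what must be shown.
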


In particular, note that Proposition~\ref{th1} together with the monotonicity of $\Phi_{\epsilon}$ yields that $\Phi_{\epsilon}(\theta)$ is uniformly bounded from above independently of $\epsilon$.

Next, let us document several conserved quantities of the system~\eqref{eq1} $\&$ \eqref{IBVP}, which both play a crucial role in the proof of Proposition~\ref{th1} and have enormous physical significance. See \cite[Lemma~1]{chen} and \cite[Propositions~2.3 $\&$ 2.4]{li} for proofs.

\begin{lemma}\label{lem: conserved quantities}
Let $\Omega = [0,1]$ or $\R$; $T>0$. For any $t \in [0,T]$, the classical solution $\left(v^\e,u^\e,\theta^\e,Z^\e\right)$ on $Q_T$ (whose existence is guaranteed by Proposition~\ref{th1}) satisfies the identities:  
	\begin{eqnarray*}
	&& 	\int_\Omega v(t,y)\,\mathrm{d}y=\int_\Omega v_0(y)\,\mathrm{d}y,\\
	&& \int_\Omega Z(t,y)\,\mathrm{d}y+\int_{0}^{t}\int_\Omega K\phi(\theta)Z\,\mathrm{d}y\,\mathrm{d}\tau =\int_{0}^{1}Z_0(y)\,\mathrm{d}y,\\
	&& \int_\Omega \left[a(v-1-\log v)+\frac{u^2}{2}+(\theta-1-\log \theta)\right]\,\mathrm{d}y+\int_{0}^{t}\int_\Omega \left[\frac{\mu u_y^2}{v\theta}+\frac{\nu \theta_y^2}{v\theta^2}-q\frac{\theta-1}{\theta}K\phi(\theta)Z\right]\,\mathrm{d}y\,\mathrm{d}\tau\\
&&\qquad	= \int_{\Omega}\left[a(v_0-1-\log v_0)+\frac{u_0^2}{2}+(\theta_0-1-\log \theta_0)\right]\,\mathrm{d}y\\
&&\qquad =: \E_0<\infty.
	\end{eqnarray*}
\end{lemma}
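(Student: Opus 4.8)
The statement to prove is Lemma~\ref{lem: conserved quantities}, the three conservation/dissipation identities. Let me sketch a proof plan.

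\medskip

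The plan is to derive each identity by testing the appropriate equation in the system~\eqref{eq1} against a well-chosen multiplier and integrating in space over $\Omega$ (then in time over $[0,t]$), using the Neumann-type boundary conditions $u|_{\p\Omega}=0$ and $\theta_y|_{\p\Omega}=Z_y|_{\p\Omega}=0$ (respectively the far-field decay \eqref{far-field cond} when $\Omega=\R$) to kill all boundary terms arising from integration by parts. Throughout one works with the regularised classical solution $(v^\e,u^\e,\theta^\e,Z^\e)$ furnished by Proposition~\ref{th1}, so all integrations by parts and differentiations under the integral sign are fully justified by the stated $B^{1+\sigma}\times[B^{2+\sigma}]^3$ regularity and the uniform bounds $C_0^{-1}\le v^\e\le C_0$, $0<\theta^\e\le C_0$; one writes $\phi=\Phi_\e$ in the identities for the approximate solution and the stated form (with $\phi$) is what one keeps track of.

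\medskip

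First, integrate the first equation $v_t-u_y=0$ over $\Omega$: since $\int_\Omega u_y\,\dd y = u|_{\p\Omega}=0$ (or vanishes at infinity), we get $\frac{\dd}{\dd t}\int_\Omega v\,\dd y=0$, hence $\int_\Omega v(t,y)\,\dd y=\int_\Omega v_0\,\dd y$. Second, integrate the fourth equation $Z_t+K\phi(\theta)Z=\bigl(\tfrac{D}{v^2}Z_y\bigr)_y$ over $\Omega$: the right side integrates to $\bigl[\tfrac{D}{v^2}Z_y\bigr]_{\p\Omega}=0$, giving $\frac{\dd}{\dd t}\int_\Omega Z\,\dd y=-K\int_\Omega\phi(\theta)Z\,\dd y$; integrating in $t$ and using $\int_\Omega Z_0=\int_0^1 Z_0$ (recall on $\R$, $Z_0$ has far-field value $0$, so the mass is finite) yields the second identity. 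The third (entropy/energy) identity is the substantive one: one combines the total-energy balance with the "thermodynamic" manipulation. Concretely, I would (i) integrate the third equation of \eqref{eq1} over $\Omega$ to get conservation of $\int_\Omega(\theta+\tfrac{u^2}{2})\,\dd y$ up to the $qK\int\phi(\theta)Z$ term and boundary terms that vanish (here $\tfrac{\mu u u_y}{v}$, $\tfrac{\nu\theta_y}{v}$, $\tfrac{au\theta}{v}$ all vanish on $\p\Omega$ since $u=0=\theta_y$ there); (ii) test the momentum equation against $u$ to extract the kinetic-energy balance and isolate $\int_\Omega\tfrac{\mu u_y^2}{v}\,\dd y$ and the pressure-work term $\int_\Omega\tfrac{a\theta}{v}u_y\,\dd y$; (iii) use $v_t=u_y$ to rewrite $\int_\Omega\tfrac{a\theta}{v}v_t\,\dd y$ and the relation producing $\frac{\dd}{\dd t}\int_\Omega a(v-1-\log v)\,\dd y$ via $\partial_t\log v = v_t/v=u_y/v$; (iv) divide the temperature equation (after subtracting the kinetic part) by $\theta$ and integrate to produce $\frac{\dd}{\dd t}\int_\Omega(\theta-1-\log\theta)\,\dd y$ together with the dissipation densities $\tfrac{\mu u_y^2}{v\theta}$ and $\tfrac{\nu\theta_y^2}{v\theta^2}$ (the latter from $\int\tfrac{1}{\theta}\bigl(\tfrac{\nu\theta_y}{v}\bigr)_y\,\dd y = -\int\nu\theta_y\partial_y(1/\theta)/v\,\dd y=\int\tfrac{\nu\theta_y^2}{v\theta^2}\,\dd y$ after an integration by parts with vanishing boundary term), and the reaction contribution $-q\tfrac{\theta-1}{\theta}K\phi(\theta)Z$; then add everything, integrate over $[0,t]$, and identify the right side as $\E_0$.

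\medskip

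The main obstacle is bookkeeping rather than any deep difficulty: one must choose the multipliers so that the cross terms cancel exactly — in particular the pressure-work term $\int_\Omega\tfrac{a\theta}{v}u_y\,\dd y$ appearing in the kinetic balance must be matched against the $\tfrac{a\theta}{v}v_t$ term and reorganised, via the entropy variable, into $\frac{\dd}{\dd t}\int_\Omega a(v-1-\log v)\,\dd y$ plus a remainder that is absorbed by the $\frac{\dd}{\dd t}\int(\theta-1-\log\theta)$ computation; and one must verify that every boundary term genuinely vanishes (on $\Omega=\R$ this uses the decay of $u,\theta_y,Z_y$ together with the uniform-in-space bounds to control products like $u\theta/v$). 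Finally, since all of this is carried out for the regularised solutions with $\phi$ replaced by $\Phi_\e$, the identities pass to the limit $\e\searrow0$ along the convergences used to construct the generalised solution in Proposition~\ref{GST}; alternatively, and more cleanly, one simply records the identities as holding for the classical solutions $(v^\e,u^\e,\theta^\e,Z^\e)$, which is exactly what the statement asserts. Since these identities are already established in \cite[Lemma~1]{chen} and \cite[Propositions~2.3 \& 2.4]{li}, the proof here can be brief, indicating the multipliers and referring to those sources for the routine details.
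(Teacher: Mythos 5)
Your proposal is correct and matches the paper's approach: the paper gives no proof of this lemma, simply citing \cite[Lemma~1]{chen} and \cite[Propositions~2.3 \& 2.4]{li}, and those references establish the identities by exactly the multiplier computation you outline (integrate $v_t=u_y$ and the $Z$-equation directly, then combine the total-energy balance with the kinetic-energy balance and the temperature equation divided by $\theta$, using the boundary/far-field conditions to kill all boundary terms, all at the level of the regularised classical solutions). The only cosmetic point is that the right-hand side $\int_0^1 Z_0\,\dd y$ in the second identity should read $\int_\Omega Z_0\,\dd y$ when $\Omega=\R$, which your argument in fact delivers.
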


\subsection{Uniform bounds on $v_y$}

For the proof of Theorem~\ref{thm: main}, one crucial estimate we need is the spacetime $L^\infty$-bound for $v_y$, the first spatial derivative  of $v$. Of course, such estimate is impossible unless it holds at $t=0$. This motivates the assumption
\begin{align*}
v_0 \in H^1(\Omega) \cap W^{1,\infty}(\Omega)
\end{align*}
in Theorem~\ref{thm: main}. As commented in Remark~\ref{rem: lip}, the Lipschitz assumption is in fact not needed to establish the existence of generalised solutions (see Definition~\ref{defsolu}).

The following result shows that the Lipschitzness of $v$ is propagated through time. 
\begin{theorem}\label{thm: estimate on vy}
Let $T>0$ and $\Omega = [0,1]$. Suppose that the initial data $(v_0, u_0, \theta_0, Z_0)$ satisfy the conditions in Theorem~\ref{thm: main}. There is a finite constant $\Lambda$ as in  Notation~\ref{notation} such that
\begin{align*}
\left\|v_y\right\|_{L^\infty(Q_T)} \leq \Lambda.
\end{align*}
\end{theorem}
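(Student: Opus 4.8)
The plan is to derive a pointwise parabolic equation for $v_y$ (or rather for a conveniently chosen expression containing it) and then apply a maximum-principle/Grönwall argument, using the uniform lower and upper bounds $C_0^{-1}\leq v\leq C_0$, $c\leq\theta\leq C_0$ from Proposition~\ref{th1} together with the Lemma~\ref{lem: conserved quantities} energy identities. Throughout I would work with the regularised solutions $(v^\e,u^\e,\theta^\e,Z^\e)$, for which everything is classical, obtain bounds independent of $\e$, and pass to the limit at the end. The starting point is the standard trick for 1D compressible Navier--Stokes: the momentum equation $u_t+(a\theta/v)_y=(\mu u_y/v)_y$ combined with $v_t=u_y$ lets one write $\mu(\log v)_{ty}=\mu(u_y/v)_y = u_t+(a\theta/v)_y$, so that the quantity $\sigma:=\mu(\log v)_t - $ (something) satisfies a transport-type equation. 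More precisely, introduce the ``effective viscous flux'' type variable and integrate in time to express $\mu(\log v)_y(t)$ in terms of $\mu(\log v_0)_y$, $\int_0^t(a\theta/v)_y$, and $\int_0^t u_t\,d\tau = u(t)-u_0$; this already reduces an $L^\infty$ bound on $v_y$ to an $L^\infty$ bound on $u$ (available: $|u^\e|\leq C_0$) plus an $L^1_t L^\infty_y$ (or $L^\infty$) bound on $\theta_y/v$ and $v_y\theta/v^2$.

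The cleaner route, which I would actually pursue, is the Kazhikhov--Shelukhin-type representation: define
\[
\eta(t,y):=\exp\!\left\{\frac{1}{\mu}\int_0^t \frac{a\theta}{v}(\tau,y)\,\dd\tau\right\},
\]
and check that $v$ can be written explicitly as
\[
v(t,y)=v_0(y)\,\eta(t,y) + \frac{1}{\mu}\int_0^t \eta(t,y)\eta^{-1}(\tau,y)\,a\theta(\tau,y)\,\dd\tau \cdot(\cdots),
\]
the precise form of which follows by solving the linear ODE in $t$ that $v$ satisfies once $(u,\theta)$ are frozen (this is exactly the computation behind the pointwise bounds $C_0^{-1}\leq v\leq C_0$ in \cite{chen, j1}). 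Differentiating this representation in $y$ expresses $v_y$ linearly in terms of $(v_0)_y$, $\int_0^t\theta_y\,\dd\tau$, $\int_0^t(\cdots)u_y\,\dd\tau$ and lower-order factors, with coefficients controlled by the already-known bounds on $v$ and $\theta$. Using $(v_0)_y\in L^\infty$ (the new hypothesis), $\int_0^T\|\theta_y\|^2_{L^2}\,\dd\tau\leq C_0$, $\int_0^T\|u_y\|^2_{L^2}\,\dd\tau\leq C_0$, and a Sobolev embedding $H^1(\Omega)\hookrightarrow C^0(\Omega)$ to upgrade the space regularity where needed, one should close the estimate by Grönwall in $t$, obtaining $\|v_y\|_{L^\infty(Q_T)}\leq\Lambda$ uniformly in $\e$.

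The main obstacle I anticipate is controlling the $y$-derivative of the reaction/temperature contributions uniformly in $\e$: although $\|\Phi_\e\|_{C^1}\leq 1/\e$ blows up, we only ever need $\Phi_\e(\theta)$ itself (uniformly bounded by Proposition~\ref{th1} and monotonicity) and $\theta_y$, never $\Phi_\e'(\theta)\theta_y$, because in the representation for $v$ the reaction term enters only through $\theta$ via the constitutive relation $p=a\theta/v$, not through $\phi$ directly --- the combustion nonlinearity sits in the $\theta$- and $Z$-equations. So the genuine difficulty is just the familiar 1D compressible Navier--Stokes bookkeeping: showing that the time integrals $\int_0^t\eta(t,y)/\eta(\tau,y)\,\dd\tau$ and the $y$-derivatives of $\eta$ are bounded, which requires $\int_0^t(a\theta/v)\,\dd\tau$ and its $y$-derivative to be bounded; the latter is $\int_0^t(a\theta_y/v - a\theta v_y/v^2)\,\dd\tau$, which feeds $v_y$ back into itself --- hence the Grönwall structure. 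One must be careful that the constant does not degenerate as $T\to\infty$ is \emph{not} required here ($T$ is allowed in $\Lambda$), which simplifies matters considerably compared to the large-time analysis in \cite{chen, cit7}. Finally, the boundary terms are harmless: the Neumann/no-slip conditions $u|_{\p\Omega}=0$, $\theta_y|_{\p\Omega}=0$ and the compatibility condition \eqref{compatibility} ensure no boundary contributions appear when differentiating the representation formula, and the passage $\e\searrow0$ is by the weak-$*$ convergence already used to prove Proposition~\ref{GST}.
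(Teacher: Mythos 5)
Your proposal follows essentially the same route as the paper: a Kazhikhov-type representation of $v$ (equivalently, the time-integrated momentum equation) differentiated in $y$, combined with the uniform bounds of Proposition~\ref{th1}, the $L^2_tL^2_y$ bound on $\theta_{yy}$ to control $\int_0^t\theta_y\,\dd\tau$ pointwise, and the key observations that neither $\Phi_\e$ nor any derivative of $u$ enters, so the estimate is uniform in $\e$ and requires only $v_0\in W^{1,\infty}$. The only cosmetic difference is that the paper's formula (with the exponential factor evaluated at the moving point $y_\star(t)$, so that $B_y$ contains no $v_y$) gives $|v_y|\le\Lambda$ by direct substitution, Gr\"onwall being used only for the auxiliary quantity $A(t)$, whereas your variant closes the bound on $v_y$ itself by Gr\"onwall through the feedback term $\frac{a}{\mu}\int_0^t \theta v_y/v^2\,\dd\tau$ --- both are fine.
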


Proposition~\ref{GST} and Theorem~\ref{thm: estimate on vy} together imply that $$v \in L^\infty\big(0,T;H^1(\Omega) \cap W^{1,\infty}(\Omega)\big).$$ 

The essential tool needed for the proof, which has already appeared in \cite{chen}, is an integral representation formula for $v$ (Eq.~\eqref{rep form} below) discovered by Kazhikhov \cite{ks}. We shall only sketch the arguments here, with emphasis on  the dependency of constants. Special care is taken to show that the Lipschitz estimate of $v$ requires only the Lipschitz bounds for the initial specific volume $v_0$, but \emph{not} for $u_0$, $Z_0$, $\theta_0$; the $H^1$-bounds for the later three variables would suffice, as in Definition~\ref{defsolu} of the generalised solutions.

Meanwhile, to avoid undesired complications that may blur the main ideas of the proof, we focus on the case of bounded domain $\Omega = [0,1]$. Theorem~\ref{thm: estimate on vy} for $\Omega=\R$ is also valid, but we postpone its proof to \S\ref{sec: gradient estimate on ubd} towards the end of the paper.

\begin{proof}[Proof of Theorem~\ref{thm: estimate on vy}] We first introduce some notations, most of which are taken from \cite{chen}. Denote by $\alpha_0$, $\beta_0$ the roots $y_0$ of the transcendental equation
\begin{align*}
y_0-1-\log y_0 = \E_1,
\end{align*}
where
\begin{align*}
\E_1:=\frac{1}{\min\left\{1,a\right\}}\left\{\E_0+q\int_{0}^{1}Z_0(y)\,\mathrm{d}y\right\}.
\end{align*}
Clearly $\alpha_0$, $\beta_0$ are positive numbers (as $q>0$ for combustion reaction).

Throughout this proof, we consider the \emph{regularised} system as in \S\ref{subsec: reg}, with $\Phi_\e$ (see Eq.~\eqref{regularised Phi}) in lieu of $\phi$. For notational convenience, we shall drop the superscipt ${}^\e$ in the remaining parts of the proof, namely that $$(v_0, u_0, \theta_0, Z_0) \equiv (v^\e_0, u^\e_0, \theta^\e_0, Z^\e_0)\quad \text{ and } \quad (v,u,\theta,Z)\equiv (v^\e, u^\e, \theta^\e, Z^\e).$$ The process for passage of limits is the same as in Chen \cite{chen}, so all we need to complete the proof is to obtain an $L^\infty$-bound for $v_y$ \emph{uniform in $\e$ and independent of higher-order norms}. See the following paragraph for details.

Under this provision, we may infer from Notation~\ref{notation} that the constants $\Lambda$ are allowed to depend only on the physical parameters $a$, $q$, $D$, $K$, $\mu$, and $\nu$; the constants $m_0$, $M_0$, $\alpha_0$, $\beta_0$, $\E_0$, $\|\phi\|_{L^\infty}$, $T$; as well as the natural norms $
\left\|\big(v_0, u_0, \theta_0, Z_0\big)\right\|_{[H^1(\Omega) \cap W^{1,\infty}(\Omega)] \times [H^1(\Omega)]^3}$ of the initial data. It, in particular,  is independent of the regularisation parameter $\e$ and the H\"{o}lder norms $\Vert v_0\Vert_{1+\sigma}$, $\Vert (u_0, \theta_0, Z_0)\Vert_{2+\sigma}$. As a consequence, $\Lambda$ is persistent under the limit $\e \searrow 0$ passing from the classical solution to the regularised system to the generalised solution to the original system~\eqref{IBVP} $\&$ \eqref{eq1}. Especially, the remark ensuing Proposition~\ref{th1} explains why the dependence on $\|\phi\|_{L^\infty}$ works here.  


Note that the Lemma \ref{lem: conserved quantities} gives us that
\begin{eqnarray*}
&&\int_{0}^{1}a(v-1-\log v)\,\mathrm{d}y\leq \E_0+q\int_{0}^{1}Z_0(y)\,\mathrm{d}y,\\
&&\int_{0}^{1}(\theta-1-\log \theta)\,\mathrm{d}y\leq \E_0+q\int_{0}^{1}Z_0(y)\,\mathrm{d}y.
\end{eqnarray*}
Thus, there exists at least one point $y_\star(t)\in [0,1]$ such that
$$\alpha_0\leq v\big(t,y_\star(t)\big),\,\theta\big(t,y_\star(t)\big)\leq \beta_0.$$

In what follows, we shall adapt the arguments in Kazhikhov \cite{ks} to derive an explicit representation formula for  $v=v(t,y)$. Let us rewrite the first equality in Eq.~\eqref{eq1} as $$\frac{\partial}{\partial t}\log v(t,y)=\frac{u_y(t,y)}{v(t,y)}$$ and integrate it over time. We thus obtain
	\begin{equation}
		\begin{aligned}
			 \frac{\partial}{\partial y}\left[\log v(t,y)-\int_{0}^{t}\frac{a\theta(t,y)}{\mu v(t,y)}\,\mathrm{d}\tau\right]
			 =\frac{\mathrm{d}}{\mathrm{d}y}\log v_0(y)+\frac{1}{\mu}[u(t,y)-u_0(y)].
			\end{aligned}	
		\end{equation}
Integrating the above equality from $y_\star(t)$ to $y$ and taking exponential, we deduce that
	\begin{equation}
		\label{rep form}
		A(t)v(t,y)=\frac{1+ \frac{a}{\mu} \int_{0}^{t}\theta(\tau,y)A(\tau)B(\tau,y)\,\mathrm{d}\tau}{B(t,y)},
	\end{equation}
    where
	$$A(t)=v_0\big(y_\star(t)\big)\exp\left\{\frac{a}{\mu}\int_{0}^{t}\frac{\theta}{v}\big(\tau,y_\star(t)\big)\,\mathrm{d}\tau\right\}$$ and 
	$$B(t,y)=\frac{1}{v_0(y)v\big(t,y_\star(t)\big)}\exp\left\{\frac{1}{\mu}\int_{y_\star(t)}^y [u_0(\xi)-u(t,\xi)]\,\mathrm{d}\xi\right\}.$$

	Note that for any $t$, the Cauchy--Schwarz inequality gives us  
	$$\left|\int_{y_\star(t)}^y u(t,\xi) \,\mathrm{d}\xi\right|\leq \Vert u(t,\bullet)\Vert_{L^2(\Omega)}\leq \sqrt{2\E_1}.$$
	This together with Eq.~\eqref{inire} implies 
	\begin{equation}
		\label{bB}
		0<\Lambda^{-1}\leq B(t,y)\leq \Lambda<\infty\qquad \text{for all } (t,y) \in Q_T,
	\end{equation}
where $\Lambda$ is the constant in Notation~\ref{notation}. Meanwhile, integrating Eq.~\eqref{rep form} over the spatial variable $y\in \Omega = (0,1)$ and using the first identity in Lemma~\ref{lem: conserved quantities}, we arrive at the integral inequality
	\begin{equation*}
0<\Lambda^{-1} \leq A(t) \leq \Lambda+\Lambda\int_{0}^{t}A(\tau)\,\mathrm{d}\tau.
		\end{equation*}
Hence, Gr\"{o}nwall's inequality implies $A(t)\leq \Lambda.$ The constant $\Lambda$ may depend on the lifespan $T$.

	We may now deduce from the representation formula~\eqref{rep form} an explicit expression for $v_y(t,y)$. Indeed, by chain rule and the fundamental theorem of calculus, we compute that
	\begin{equation}
		\label{vy}
		\begin{aligned}
			v_y(t,y)&=\frac{1+\frac{a}{\mu}\int_{0}^{t}\theta(\tau,y)A(\tau)B(\tau,y)\,\mathrm{d}\tau}{A(t)}\left[-\frac{B_y(t,y)}{B^2(t,y)}\right]\\
			&\qquad+\frac{1}{A(t)B(t,y)}{\int_{0}^{t}\frac{a}{\mu}A(\tau)\big[\theta_y(\tau,y)B(\tau,y)+\theta(\tau,y)B_y(\tau,y)\big]\,\mathrm{d}\tau},
		\end{aligned}
	\end{equation}
	and
	\begin{equation*}
			B_y(t,y)=B(t,y)\left[-\frac{v_{0y}(y)}{v_0(y)}+\frac{1}{\mu}\big(u_0(y)-u(t,y)\big)\right].
	\end{equation*}
Here $v_{0y}=\frac{\p v_0}{\p y}$. Thus, by the $L^\infty$-bound for $u$ (see  Proposition~\ref{th1}), the assumptions on the initial data in the statement of Theorem~\ref{thm: estimate on vy} (in particular, the strictly positive lower bound for $v_0$ and its Lipschitz upper bound), and estimate~\eqref{bB} above, we deduce that
	\begin{equation}\label{new2}
		|B_y(t,y)|\leq \Lambda\qquad \text{for all } (t,y) \in Q_T.
	\end{equation}
It is crucial that the above expression for $B_y$ does not involve any derivative of $u$ or $u_0$.
	 
Also observe the following simple bound: for any $t\in [0,T]$ and $y \in \Omega$, 
\begin{align}\label{new1}
\left|\int_{0}^{t} \theta_y(\tau,y)\mathrm{d}\tau \right|&= \left|\int_{0}^{t}\int_{0}^{y} \theta_{yy}(\tau,\xi)\,\mathrm{d}\xi\,\mathrm{d}\tau\right| \nonumber\\
& \leq \int_{0}^{t}\left\| \theta_{yy}(\tau,\bullet)\right\|_{L^2(\Omega)}\,\mathrm{d}\tau\leq \Lambda.
\end{align}
We used here the fundamental theorem of calculus, the boundary condition for $\theta$ at $y=0$, the Cauchy--Schwarz inequality, as well as  Proposition~\ref{th1}.

Therefore, putting together the bounds in Eqs.~\eqref{bB}, \eqref{new2}, $\&$ \eqref{new1} and substituting them into Eq.~\eqref{vy}, we readily conclude that $$|v_y(t,y)|\leq \Lambda \qquad \text{for all } (t,y) \in Q_T.$$ This completes the proof.  \end{proof}

Note that the Lipschitzness of $v$ is equivalent to that of $\rho = v^{-1}$. This is because
\begin{align*}
\frac{1}{v(t,y_1)} - \frac{1}{v(t,y_2)} = \frac{v(t,y_2)-v(t,y_1)}{v(t,y_1)v(t,y_2)}\qquad \text{for any $t \in [0,T]$ and $y_1, y_2 \in \Omega$},
\end{align*}
and $v$ is strictly bounded below by a positive constant (see Proposition~\ref{th1}).

\section{An inequality based on Fisher information}\label{sec: Fisher}

Now let us introduce the key inequality in this work. It was first discovered in the recent preprint \cite{method1} by Cie\'{s}lak--Fuest--Hajduk--Sier\.{z}\c{e}ga, which relates the Hessian of the square-root of a positive function to the dissipation of Fisher information along heat flow.

\begin{lemma}
	\label{diffineq}
Let 	$- \infty \leq a < b \leq \infty$, and let $\Psi$ be a positive $C^2$-function on $]a,b[$ such that $\Psi_x(a)=\Psi_x(b)=0$. Then it holds that
	\begin{equation*}
	   \int_{a}^{b}\left[(\Psi^{1/2})_{xx}\right]^2\,\mathrm{d}x\leq \frac{13}{8}\int_{a}^{b}\Psi\left[(\log \Psi)_{xx}\right]^2\,\mathrm{d}x.
	\end{equation*}
\end{lemma}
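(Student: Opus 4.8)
The plan is to expand both sides in terms of $\Psi$ and its first two derivatives, reduce the inequality to a pointwise-after-integration statement, and close it via integration by parts together with a Cauchy--Schwarz / weighted Young argument. First I would write $u := \Psi^{1/2}$, so that $\Psi = u^2$ with $u>0$ and $u_x(a)=u_x(b)=0$ (this boundary condition follows from $\Psi_x = 2uu_x$ and positivity of $u$). A direct computation gives
\begin{equation*}
(\Psi^{1/2})_{xx} = u_{xx}, \qquad (\log\Psi)_{xx} = 2(\log u)_{xx} = 2\left(\frac{u_{xx}}{u} - \frac{u_x^2}{u^2}\right),
\end{equation*}
so the right-hand side is $\tfrac{13}{8}\int_a^b u^2 \cdot 4\left(\tfrac{u_{xx}}{u} - \tfrac{u_x^2}{u^2}\right)^2\,\dd x = \tfrac{13}{2}\int_a^b\left(u_{xx} - \tfrac{u_x^2}{u}\right)^2\,\dd x$. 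Thus the claim becomes
\begin{equation*}
\int_a^b u_{xx}^2\,\dd x \leq \frac{13}{2}\int_a^b\left(u_{xx} - \frac{u_x^2}{u}\right)^2\,\dd x = \frac{13}{2}\int_a^b\left(u_{xx}^2 - 2\frac{u_x^2 u_{xx}}{u} + \frac{u_x^4}{u^2}\right)\,\dd x.
\end{equation*}

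Next I would handle the cross term $\int_a^b \tfrac{u_x^2 u_{xx}}{u}\,\dd x$ by integration by parts: writing $u_x^2 u_{xx}/u = \tfrac{1}{3}\left(u_x^3/u\right)_x + \tfrac{1}{3}u_x^4/u^2$ (one checks $\left(u_x^3/u\right)_x = 3u_x^2 u_{xx}/u - u_x^4/u^2$), and using $u_x(a)=u_x(b)=0$ to kill the boundary term, yields
\begin{equation*}
\int_a^b \frac{u_x^2 u_{xx}}{u}\,\dd x = \frac{1}{3}\int_a^b \frac{u_x^4}{u^2}\,\dd x.
\end{equation*}
Substituting back, the right-hand side becomes $\tfrac{13}{2}\int_a^b u_{xx}^2\,\dd x + \tfrac{13}{2}\left(1 - \tfrac{2}{3}\right)\int_a^b u_x^4/u^2\,\dd x = \tfrac{13}{2}\int_a^b u_{xx}^2\,\dd x + \tfrac{13}{6}\int_a^b u_x^4/u^2\,\dd x$, so the inequality reduces to
\begin{equation*}
\int_a^b u_{xx}^2\,\dd x \leq \frac{13}{2}\int_a^b u_{xx}^2\,\dd x + \frac{13}{6}\int_a^b \frac{u_x^4}{u^2}\,\dd x,
\end{equation*}
i.e. $-\tfrac{11}{2}\int u_{xx}^2 \leq \tfrac{13}{6}\int u_x^4/u^2$, which is trivially true since both integrals on the visible sides are nonnegative. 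Wait --- this would make the constant $13/8$ far from sharp, which suggests the intended proof instead keeps the $u_{xx}^2$ terms on one side and genuinely uses the sign of $\int u_x^4/u^2$. Let me reorganize: the honest reduction is to prove $\int u_{xx}^2 \leq \tfrac{13}{8}\int u^2(\log\Psi)_{xx}^2$ where, \emph{without} first expanding, I should keep $\int u^2(\log u)_{xx}^2$ intact and bound $\int u_{xx}^2$ by it plus a \emph{negative} multiple of something; the integration-by-parts identity above is the right engine, but the constant $13/8$ comes from optimizing a Young's inequality weight, so the actual argument must be: expand, use the IBP identity to convert the cross term, and then apply Young's inequality $2\tfrac{u_x^2 u_{xx}}{u} \leq \lambda u_{xx}^2 + \tfrac{1}{\lambda}\tfrac{u_x^4}{u^2}$ \emph{before} using the IBP identity on the remaining pieces, choosing $\lambda$ to make the coefficient work out to $13/8$.

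The step I expect to be the main obstacle is precisely this bookkeeping: one must apply the integration-by-parts identity and Young's inequality in the correct order and with the correct free parameter so that the $u_x^4/u^2$ terms (whose total coefficient can be made to have a favorable sign) absorb the slack, leaving exactly $\tfrac{13}{8}$ in front. Concretely I would set up $\int u_{xx}^2 = \int\left(u_{xx} - \tfrac{u_x^2}{u}\right)^2 + 2\int\tfrac{u_x^2 u_{xx}}{u} - \int\tfrac{u_x^4}{u^2}$, use the IBP identity to replace $\int\tfrac{u_x^2 u_{xx}}{u}$ by $\tfrac13\int\tfrac{u_x^4}{u^2}$ \emph{only partially} --- say replace a fraction $\mu$ of it that way and bound the remaining fraction $1-\mu$ by Young --- then optimize over $\mu$ and the Young weight. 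I would double-check the final constant by testing against a near-extremal profile (e.g. a smooth bump) to confirm $13/8$ is what the optimization produces, and then present the clean inequality chain. Throughout, the hypotheses $\Psi\in C^2$, $\Psi>0$, and $\Psi_x(a)=\Psi_x(b)=0$ are exactly what is needed to justify every integration by parts (no boundary contributions) and every division by $u=\Psi^{1/2}$.
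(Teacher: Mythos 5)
Your first computation already proves the lemma, by a route genuinely different from the paper's, and your subsequent pivot to a ``Young-weight optimisation'' plan is unnecessary --- indeed misguided, because $13/8$ is not a sharp constant that any optimisation should reproduce. Writing $u=\Psi^{1/2}$, your integration-by-parts identity $\int_a^b u_x^2u_{xx}/u\,\dd x=\tfrac13\int_a^b u_x^4/u^2\,\dd x$ (the boundary term $u_x^3/u$ vanishing by the Neumann condition) turns the expansion of the square into the exact identity
\begin{equation*}
\frac14\int_a^b\Psi\left[(\log\Psi)_{xx}\right]^2\,\dd x=\int_a^b\Bigl(u_{xx}-\frac{u_x^2}{u}\Bigr)^2\,\dd x=\int_a^b u_{xx}^2\,\dd x+\frac13\int_a^b\frac{u_x^4}{u^2}\,\dd x\;\geq\;\int_a^b\bigl[(\Psi^{1/2})_{xx}\bigr]^2\,\dd x,
\end{equation*}
so the stated estimate holds even with $1/4$ in place of $13/8$; there is nothing left to optimise, and your worry that the conclusion came out ``too easily'' is unfounded. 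By contrast, the paper gets $13/8=\tfrac12+\tfrac98$ from the crude pointwise splitting $[(\Psi^{1/2})_{xx}]^2\leq\tfrac12\Psi[(\log\Psi)_{xx}]^2+\tfrac18\Psi_x^4/\Psi^3$ followed by the quoted Bernis--Winkler inequality $\int\Psi_x^4/\Psi^3\leq(2+\sqrt{1})^2\int\Psi[(\log\Psi)_{xx}]^2$; your identity is in effect the elementary one-dimensional proof of that auxiliary inequality, so you bypass the external lemma and obtain a stronger constant. (Incidentally, since $u_x^4/u^2=\Psi_x^4/(16\Psi^3)$, your identity reads $\int\Psi[(\log\Psi)_{xx}]^2=4\int[(\Psi^{1/2})_{xx}]^2+\tfrac1{12}\int\Psi_x^4/\Psi^3$, which shows that the ``reverse inequality'' remark following the lemma actually holds with the opposite sign under these boundary conditions, with equality only when $\Psi_x\equiv0$.) What you must still state explicitly --- and this is where the hypotheses genuinely enter --- is the justification of the expansion and the integration by parts: you need the individual integrals $\int u_{xx}^2$, $\int u_x^2|u_{xx}|/u$, $\int u_x^4/u^2$ to be finite and the boundary term $u_x^3/u$ to vanish at $a$ and $b$. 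This is immediate when $a,b$ are finite and $\Psi$ is positive and $C^2$ up to the closed interval (as in the application, where $\Psi=Z^{\epsilon}+\delta\geq\delta>0$ on a compact interval); when $a$ or $b$ is infinite, or if $\Psi$ were allowed to degenerate at an endpoint, one should argue by cutoff or exhaustion under the standing assumption that the right-hand side is finite --- the paper is no more careful on this point, so this is a caveat to record, not a gap. Delete the closing paragraph about choosing $\mu$, $\lambda$ and testing near-extremal profiles, and simply present the identity together with the justifications above.
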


When $a = -\infty$, the condition $\Psi_x(a)=0$ is understood as $\lim_{y \searrow -\infty}\Psi_x(y)=0$; similarly for the case $b=\infty$. For convenience of the readers, we shall sketch a proof below. We essentially follow \cite{method1} and do not claim any originality for our arguments. In \cite{method1} the inequality is established only for finite $a$ and $b$, but the same proof carries over to the general case without much alterations. Of course, if either  $a$ or $b$ is infinite, the inequality is effective only when $\int_{a}^{b}\Psi\left[(\log \Psi)_{xx}\right]^2\,\mathrm{d}x<\infty$. 

\begin{proof}[Proof of Lemma~\ref{diffineq}]
For $0 <\Psi \in C^2(]a,b[)$, one has 
\begin{align*}
& \left[(\Psi^{1/2})_{xx}\right]^2=\left(\frac{\Psi_{xx}}{2\Psi^{1/2}}-\frac{\Psi_x^2}{4\Psi^{3/2}}\right)^2=\frac{1}{4}\left(\frac{\Psi_{xx}}{\Psi^{1/2}}-\frac{1}{2}\frac{\Psi_x^2}{\Psi^{3/2}}\right)^2,\\
&\Psi\left[(\log \Psi)_{xx}\right]^2=\Psi\left(\frac{\Psi_{xx}}{\Psi}-\frac{\Psi_x^2}{\Psi^{2}}\right)^2=\left(\frac{\Psi_{xx}}{\Psi^{1/2}}-\frac{\Psi_x^2}{\Psi^{3/2}}\right)^2.
\end{align*}
Completing the square and using the inequality $(a+b)^2\leq 2(a^2+b^2)$, we obtain that
\begin{equation}
			\begin{aligned}
				   \left[(\Psi^{1/2})_{xx}\right]^2&=\frac{1}{4}\left(\frac{\Psi_{xx}}{\Psi^{1/2}}-\frac{\Psi_x^2}{\Psi^{3/2}}+\frac{1}{2}\frac{\Psi_x^2}{\Psi^{3/2}}\right)^2\\
				   &\leq\frac{1}{4}\left[2\left(\frac{\Psi_{xx}}{\Psi^{1/2}}-\frac{\Psi_x^2}{\Psi^{3/2}} \right)^2+\frac{1}{2}\left(\frac{\Psi_x^2}{\Psi^{3/2}}\right)^2\right]\\
				   &=\frac{1}{2}\Psi\left[(\log \Psi)_{xx}\right]^2+\frac{1}{8}\left(\frac{\Psi_x^2}{\Psi^{3/2}}\right)^2.
				\end{aligned}
				\nonumber
		\end{equation}

To conclude, one invokes the following inequality attribute  in \cite{method1} to Winkler \cite{winkler}. In full generality, it reads:
\begin{lemma*}[Bernis-type inequality]
	\label{Btin}
Let $h\in C^{1}(\mathbb{R}_{+})$ be a positive function, and let $\Omega\subset \R^N$ be a smooth Euclidean domain. Set $\Theta(s):=\int_{1}^{s}\frac{\mathrm{d}\sigma}{h(\sigma)}$ for $s>0$. Then, for any positive function $\varphi\in C^{2}\big(\bar{\Omega}\big)$ with $\frac{\partial \varphi}{\partial \nu}=0$ on $\partial \Omega$, it holds that
	$$\int_{\Omega}\frac{h^{\prime}(\varphi)}{h^{3}(\varphi)}\left|\nabla \varphi\right|^{4}\,\dd x\leq (2+\sqrt{N})^2\int_{\Omega}\frac{h(\varphi)}{h^{\prime}(\varphi)}\left|D^2\Theta(\varphi)\right|^{2}\,\dd x.$$
\end{lemma*}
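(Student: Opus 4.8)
The plan is to prove the Bernis-type inequality (the \emph{Lemma*} labelled \texttt{Btin}) by a direct integration-by-parts computation, following Winkler's strategy as quoted in \cite{method1}. First I would unwind the definitions: since $\Theta(s) = \int_1^s h(\sigma)^{-1}\,\dd\sigma$, we have $\nabla\Theta(\varphi) = h(\varphi)^{-1}\nabla\varphi$ and hence $D^2\Theta(\varphi) = h(\varphi)^{-1}D^2\varphi - h'(\varphi)h(\varphi)^{-2}\nabla\varphi\otimes\nabla\varphi$. The right-hand side of the claimed inequality therefore expands, after multiplying by $h(\varphi)/h'(\varphi)$, into three integrals: one involving $|D^2\varphi|^2$ (with weight $\tfrac{1}{h(\varphi)h'(\varphi)}$), one cross term involving $D^2\varphi(\nabla\varphi,\nabla\varphi)$, and the target term $\tfrac{h'(\varphi)}{h(\varphi)^3}|\nabla\varphi|^4$. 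So I would set $I := \int_\Omega \tfrac{h'(\varphi)}{h^3(\varphi)}|\nabla\varphi|^4\,\dd x$ and aim to bound $I$ from above by a constant times $\int_\Omega \tfrac{h(\varphi)}{h'(\varphi)}|D^2\Theta(\varphi)|^2\,\dd x$.

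The key step is an integration by parts that converts $I$ into something controllable by $D^2\Theta$. The natural move is to integrate $\int_\Omega \tfrac{h'(\varphi)}{h^3(\varphi)}|\nabla\varphi|^2\,\nabla\varphi\cdot\nabla\varphi\,\dd x$ by parts, writing part of the integrand as a divergence. Concretely, note that $\nabla\bigl(g(\varphi)\bigr) = g'(\varphi)\nabla\varphi$ for a suitable primitive $g$, so one can write $I$ (up to constants) as $-\int_\Omega g(\varphi)\,\operatorname{div}\!\bigl(|\nabla\varphi|^2\nabla\varphi\bigr)\,\dd x$ plus a boundary term, where the boundary term vanishes because $\partial\varphi/\partial\nu = 0$ on $\partial\Omega$. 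Expanding $\operatorname{div}(|\nabla\varphi|^2\nabla\varphi) = |\nabla\varphi|^2\Delta\varphi + 2 D^2\varphi(\nabla\varphi,\nabla\varphi)$ and then re-expressing everything in terms of $\nabla\Theta(\varphi) = h(\varphi)^{-1}\nabla\varphi$ and $D^2\Theta(\varphi)$, one arrives at an inequality of the schematic form $I \leq C_1 \int_\Omega (\text{weight})\,|\nabla\varphi|^2\,|D^2\Theta(\varphi)|\,\dd x$. Applying Cauchy--Schwarz to split this into $I^{1/2}$ times $\bigl(\int_\Omega \tfrac{h(\varphi)}{h'(\varphi)}|D^2\Theta(\varphi)|^2\bigr)^{1/2}$ and dividing through by $I^{1/2}$ then yields the result, and one tracks the dimensional dependence through the constant coming from the pointwise bound $|\Delta\varphi| \leq \sqrt{N}\,|D^2\varphi|$ and the $2$ from the second term in the divergence, producing the sharp factor $(2+\sqrt{N})^2$.

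With the Bernis-type inequality in hand, finishing the proof of Lemma~\ref{diffineq} is immediate: apply it on $\Omega = ]a,b[ \subset \R$ (so $N=1$ and $(2+\sqrt{N})^2 = 9$) with $\varphi = \Psi$ and $h(s) = s$, so that $h'/h^3 = s^{-2}$, $h/h' = s$, and $\Theta(s) = \log s$; hence $D^2\Theta(\Psi) = (\log\Psi)_{xx}$ and the inequality reads $\int_a^b \tfrac{\Psi_x^4}{\Psi^2}\,\dd x \leq 9\int_a^b \Psi\,[(\log\Psi)_{xx}]^2\,\dd x$. Since $\bigl(\tfrac{\Psi_x^2}{\Psi^{3/2}}\bigr)^2 = \tfrac{\Psi_x^4}{\Psi^3}$, wait — let me be careful: $\bigl(\tfrac{\Psi_x^2}{\Psi^{3/2}}\bigr)^2 = \tfrac{\Psi_x^4}{\Psi^3} = \tfrac{1}{\Psi}\cdot\tfrac{\Psi_x^4}{\Psi^2}$, and plugging into the pointwise bound derived above, $\bigl[(\Psi^{1/2})_{xx}\bigr]^2 \leq \tfrac12\Psi[(\log\Psi)_{xx}]^2 + \tfrac18\cdot\tfrac{1}{\Psi}\cdot\tfrac{\Psi_x^4}{\Psi^2}$. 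Integrating and using the Bernis inequality on the last term gives $\int_a^b\bigl[(\Psi^{1/2})_{xx}\bigr]^2 \leq \bigl(\tfrac12 + \tfrac98\bigr)\int_a^b\Psi[(\log\Psi)_{xx}]^2 = \tfrac{13}{8}\int_a^b\Psi[(\log\Psi)_{xx}]^2$, as claimed. For the infinite-interval case I would note that the boundary terms in the integration by parts still vanish by the hypothesis $\lim_{x\to a^+}\Psi_x(x) = \lim_{x\to b^-}\Psi_x(x) = 0$ together with the finiteness of $\int_a^b\Psi[(\log\Psi)_{xx}]^2\,\dd x$, which is the only regime in which the statement has content.

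The main obstacle I anticipate is making the integration by parts fully rigorous when $a$ or $b$ is infinite: one must justify that the boundary contributions at $\pm\infty$ genuinely vanish, which requires extracting decay of $\varphi$, $\nabla\varphi$ and the relevant weights from the finiteness of the right-hand integral alone, rather than assuming it. The cleanest route is probably to prove the inequality first on a large bounded interval $]a_n, b_n[$ with $\Psi$ replaced by a suitable truncation/modification satisfying the Neumann condition at the endpoints, and then pass to the limit $a_n \searrow a$, $b_n \nearrow b$ using monotone/dominated convergence — but one has to choose the truncation so as not to spoil the no-flux boundary condition, which is a little delicate. Apart from that, the computation is routine but requires careful bookkeeping of constants to land exactly on $(2+\sqrt N)^2$ and then $\tfrac{13}{8}$.
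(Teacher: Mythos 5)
The paper does not prove this lemma at all: it is quoted as a black box inside the proof of Lemma~\ref{diffineq}, with attribution to Winkler \cite{winkler} via \cite{method1}. So your proposal necessarily differs from the paper in that it supplies an actual proof, and what you sketch is essentially Winkler's original argument: pass to $w:=\Theta(\varphi)$, integrate by parts against $\nabla\big(h(\varphi)\big)$, expand the divergence, use $|\Delta w|\le\sqrt{N}\,|D^2w|$, and close with Cauchy--Schwarz and an absorption of $I^{1/2}$. That strategy is sound and does produce the constant $(2+\sqrt{N})^2$; what the paper buys by citing instead is brevity, nothing more.

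Two points in your write-up need repair, though. First, the integration by parts must be organised around $w=\Theta(\varphi)$, not around $\varphi$: the useful identity is $\frac{h'(\varphi)}{h^3(\varphi)}|\nabla\varphi|^4=\nabla\big(h(\varphi)\big)\cdot\big(|\nabla w|^2\nabla w\big)$, which after integration by parts (the boundary term vanishes because $\partial_\nu\varphi=0$ forces $\partial_\nu w=0$) gives $I=-\int_\Omega h(\varphi)\,\operatorname{div}\big(|\nabla w|^2\nabla w\big)\,\dd x$; then $|\Delta w|\le\sqrt{N}|D^2w|$ and the splitting $h|D^2w||\nabla w|^2=\big(\sqrt{h/h'}\,|D^2w|\big)\big(\sqrt{hh'}\,|\nabla w|^2\big)$ yield $I\le(2+\sqrt{N})\big(\int_\Omega\frac{h}{h'}|D^2\Theta(\varphi)|^2\,\dd x\big)^{1/2}I^{1/2}$, and one divides by $I^{1/2}$ after noting $I<\infty$ (automatic for positive $\varphi\in C^2(\bar\Omega)$ on a bounded domain) and $h'>0$ (implicitly needed for both sides to make sense). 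Your version, $-\int g(\varphi)\operatorname{div}(|\nabla\varphi|^2\nabla\varphi)$ together with $|\Delta\varphi|\le\sqrt{N}|D^2\varphi|$, does not land on $(2+\sqrt{N})^2$ directly: rewriting $D^2\varphi$ in terms of $D^2\Theta(\varphi)$ reintroduces the rank-one term $\frac{h'}{h^2}\nabla\varphi\otimes\nabla\varphi$, so the target integral reappears on the right and a second absorption with a different constant is needed. Second, in the application you take $h=\mathrm{Id}$ but write $h'/h^3=s^{-2}$; it is $s^{-3}$, so the lemma controls exactly $\int\Psi_x^4/\Psi^3=\int\big(\Psi_x^2/\Psi^{3/2}\big)^2$ by $9\int\Psi[(\log\Psi)_{xx}]^2$, and then $\frac12+\frac18\cdot 9=\frac{13}{8}$, which is precisely the paper's deduction of Lemma~\ref{diffineq}. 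Your worry about boundary terms at infinity does not concern the statement as given (a bounded smooth domain suffices for the paper's use after its remark that the argument ``carries over''); it pertains to the unbounded-interval extension of Lemma~\ref{diffineq}, where the kind of exhaustion/limiting argument you outline is indeed what is being tacitly invoked.
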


Applying the above inequality to $\Psi = \varphi$, $h={\rm Id}$, $N=1$, and integrating the previous pointwise inequality over space, we arrive at 
\begin{align*}
\int_{0}^{1}\left[(\Psi^{1/2})_{xx}\right]^2\,\mathrm{d}x \leq \frac{13}{8}\int_{0}^{1}\Psi\left[(\log \Psi)_{xx}\right]^2\,\mathrm{d}x,
\end{align*}
which is what we want to prove.  \end{proof}

In passing, we note that the reverse inequality is immediate:
\begin{align*}
\int_{a}^{b}\Psi\left[(\log \Psi)_{xx}\right]^2\,\mathrm{d}x \leq 4\int_{a}^{b}\left[(\Psi^{1/2})_{xx}\right]^2\,\mathrm{d}x.
\end{align*}
In fact, one has the pointwise identity
\begin{align*}
\left(\Psi^{1/2}\right)_{xx} - \frac{1}{2} \Psi^{1/2} \left(\log \Psi\right)_{xx} = \frac{1}{4} \frac{\left(\Psi_x\right)^2}{\Psi^{3/2}}
\end{align*}
for $\Psi$ as in Lemma~\ref{diffineq}.

\section{Weighted gradient estimate for reactant fraction on bounded domain}\label{sec: gradient estimate on bd}

In this section, we prove Theorem~\ref{thm: main} for $\Omega = [0,1]$. 

\begin{proof}[Proof of Theorem~\ref{thm: main} for bounded domain]

As in the proof for Theorem~\ref{thm: estimate on vy}, let us  consider the regularised system --- \emph{i.e.}, with $\Phi \equiv \Phi_\e$ in place of $\phi$ --- and we systematically drop the superscripts ${}^\e$ in our variables.

It suffices to seek a uniform-in-$\e$ bound for $\frac{Z_y}{\sqrt{Z}}$ in $L^\infty_t L^2_y$ by $\Lambda$ (see  Notation~\ref{notation}). Then the theorem follows from a standard passage of limit process, which can be directly adapted from \cite{chen}. 
To avoid potential dangers of dividing by zero, we introduce the new variable
\begin{equation}\label{X, new var}
X := Z+\delta \equiv Z^{\epsilon}+\delta\qquad \text{ for } \e, \delta>0,
\end{equation}
and look for uniform estimates independent of both the regularisation parameter $\e$ and the ``shift'' parameter $\delta$. Say $\delta < 100^{-1}$.

Note that $X$ satisfies the PDE
\begin{equation}
	\label{X PDE}\left\{
	\begin{aligned}
		&X_t+K\Phi(\theta)X-K\Phi(\theta)\delta=\left(\frac{D}{v^2}X_y\right)_y\qquad\text{in } Q_T,\\
		&X_0(y)=Z_0(y)+\delta \qquad\text{at } t=0,\\
		&X_y=0 \qquad \text{on } \p\Omega = \{0,1\}. 
	\end{aligned}\right.
\end{equation}
Here $X_0$ is the initial data $X|_{t=0}$. Dividing $X^{1/2}$ on both sides of Eq.~\eqref{X PDE}, we obtain the evolution equation for $X^{1/2}$:
\begin{equation}
	2\frac{\p}{\p t}X^{1/2}+K\Phi(\theta)X^{1/2}-\frac{K\Phi(\theta)\delta}{X^{1/2}}=\frac{(\frac{D}{v^{2}}X_y)_y}{X^{1/2}}.
	\nonumber
\end{equation}
As $X$ is strictly positive and has the same regularity as $Z$, the above PDE is understood in the classical sense.

We deduce by integrating over $\Omega =[0,1]$ that
\begin{align*}
&\frac{\mathrm{d}}{\mathrm{d}t}\int_{0}^{1}\frac{X_y^2}{X}\,\mathrm{d}y\\
&\quad= 4\frac{\mathrm{d}}{\mathrm{d}t}\int_{0}^{1}(X^{1/2})_y^2\,\mathrm{d}y\\
&\quad=8\int_{0}^{1}(X^{1/2})_y(X^{1/2})_{ty}\,\mathrm{d}y\\
&\quad=4\int_{0}^{1}(X^{1/2})_y\left[\left(\frac{D}{v^{2}}X_y\right)_y \frac{1}{X^{1/2}}-K\Phi(\theta)X^{1/2}\right]_{y} \,\mathrm{d}y\\
&\quad=4\int_{0}^{1}(X^{1/2})_y\left[2\frac{D}{v^2}(X^{1/2})_{yy}+\frac{1}{2}\frac{D}{v^2}\frac{X_y^2}{X^{3/2}}+\left(\frac{D}{v^2}\right)_y\frac{X_y}{X^{1/2}}+K\Phi(\theta)\left(\frac{\delta}{X^{1/2}}-X^{1/2}\right)\right]_{y}\,\mathrm{d}y.
\end{align*}
The penultimate equality follows from the PDE for $X^{1/2}$.

Let us write
\begin{align}\label{I1, I2, I3}
\frac{\mathrm{d}}{\mathrm{d}t}\int_{0}^{1}\frac{X_y^2}{X}\,\mathrm{d}y = I_1+I_2+I_3
\end{align} 
where
\begin{eqnarray*}
&&I_1:=4\int_{0}^{1}(X^{1/2})_y\left[2\frac{D}{v^2}(X^{1/2})_{yy}+\frac{1}{2}\frac{D}{v^2}\frac{X_y^2}{X^{3/2}}\right]_{y}\,\mathrm{d}y,\\
&&I_2:=4\int_{0}^{1}(X^{1/2})_y\left[\left(\frac{D}{v^2}\right)_y\frac{X_y}{X^{1/2}}\right]_{y}\,\mathrm{d}y,\\
&&I_3:=4\int_{0}^{1}(X^{1/2})_y\left[K\Phi(\theta)\left(\frac{\delta}{X^{1/2}}-X^{1/2}\right)\right]_{y}\,\mathrm{d}y.
\end{eqnarray*}
In what follows, we shall repetitively argue by integration by parts, employing the boundary conditions $X_y(t,0)=X_y(t,1)=0$ as in Eq.~\eqref{IBVP}.

For $I_1$, we compute that
\begin{align*}
I_1 &= -8\int_{0}^{1}\left[(X^{1/2})_{yy}\right]^2\frac{D}{v^2}\,\mathrm{d}y + 8\int_{0}^{1}(X^{1/2})_y\left(\frac{D}{v^2}\right)_y\frac{(X^{1/2})_y^2}{X^{1/2}}\,\mathrm{d}y\\
&\qquad  +8\int_{0}^{1}(X^{1/2})_y\frac{D}{v^2}\left[\frac{2(X^{1/2})_y(X^{1/2})_{yy}}{X^{1/2}}-\frac{(X^{1/2})_y^3}{X}\right]\,\mathrm{d}y\\
&= -8\int_{0}^{1}X\frac{D}{v^2}\left[(\log X^{1/2})_{yy}\right]^2\,\mathrm{d}y + 8\int_{0}^{1}(X^{1/2})_y\left(\frac{D}{v^2}\right)_y\frac{(X^{1/2})_y^2}{X^{1/2}}\,\mathrm{d}y\\
&=-2\int_{0}^{1}X\frac{D}{v^2}\left[(\log X)_{yy}\right]^2\,\mathrm{d}y  + \int_0^1\left(\frac{D}{v^2}\right)_y \frac{\left(X_y\right)^3}{X^2} \,\dd y.
\end{align*}
Hence,
\begin{align}\label{concA}
I_1+I_2 &= \underbrace{ - 2\int_{0}^{1}\frac{D}{v^2} X\left[(\log X)_{yy}\right]^2\,\mathrm{d}y}_{=:\,[{\rm GOOD}]}  \underbrace{-2\int_{0}^{1}\left(\frac{D}{v^2}\right)_yX_y(\log X)_{yy}\,\mathrm{d}y}_{=:\,[{\rm BAD}]}.
\end{align}

The $[{\rm GOOD}]$ term is of favourable sign. For the $[{\rm BAD}]$ term, one may resort to Young's inequality to deduce, for arbitrarily small $\eta>0$, that
\begin{align}\label{intermediate}
\big|[{\rm BAD}]\big| &=\left| \int_0^1 \frac{D^{1/2}X^{1/2}}{v} \left[\log (X^{1/2})\right]_{yy} \cdot \frac{X_y}{X^{1/2}} \frac{(D/v^2)_y}{D^{1/2}\slash v}\,\dd y \right| \nonumber \\
&\leq \eta  \int_{0}^{1}\frac{D}{v^2}X\left[\left(\log X\right)_{yy}\right]^2\,\mathrm{d}y + C_{\eta}D \int_0^1 \frac{(X_y)^2}{X} \frac{(v_y)^2}{v^4}\,\dd y.
\end{align}
Here $C_\eta \sim \eta^{-1}$ is a positive constant. For the second term on the right-hand side of Eq.~\eqref{intermediate}, we have $v \geq m_0 >0$ by Proposition~\ref{th1} and $|v_y| \leq \Lambda$ by Theorem~\ref{thm: estimate on vy}, both uniformly in spacetime. Thus, recalling Notation~\ref{notation}, one may continue Eq.~\eqref{intermediate} by 
\begin{equation}\label{concB}
\big|[{\rm BAD}]\big| \leq \eta  \int_{0}^{1}\frac{D}{v^2}X\left[\left(\log X\right)_{yy}\right]^2\,\mathrm{d}y + C_{\eta} \Lambda \int_0^1 \frac{(X_y)^2}{X} \,\dd y.
\end{equation}

Now let us turn to $I_3$. Integration by parts yields that
\begin{align*}
I_3 = 4\int_{0}^{1}(X^{1/2})_{yy}K\Phi(\theta)\left[X^{1/2}-\frac{\delta}{X^{1/2}}\right]\,\mathrm{d}y.
\end{align*}
Thus, for any $\eta'>0$ we have that
\begin{align*}
\left|I_3\right| &\leq \eta' \int_0^1 \left[(X^{1/2})_{yy} \right]^2\,\dd y + C_{\eta'} K^2\int_0^1 \Phi^2(\theta)\left[X^{1/2}-\frac{\delta}{X^{1/2}}\right]^2\,\mathrm{d}y\\
&\leq \frac{13 \eta'}{8} \int_0^1 X \left[\left(\log X\right)_{yy}\right]^2\,\dd y + C_{\eta'} \Lambda \int_0^1 \left[X^{1/2}-\frac{\delta}{X^{1/2}}\right]^2\,\mathrm{d}y\\
&\leq \frac{13\eta' (M_0)^2}{8D} \int_{0}^{1}\frac{D}{v^2}X\left[\left(\log X\right)_{yy}\right]^2\,\mathrm{d}y+ C_{\eta'} \Lambda \int_0^1 \left[X^{1/2}-\frac{\delta}{X^{1/2}}\right]^2\,\mathrm{d}y,
\end{align*}
where $C_{\eta'}\sim (\eta')^{-1}$. In the above, we used the Young's inequality, Lemma~\ref{diffineq}, and the upper bounds for $v$ and $\theta$ in Proposition~\ref{th1}. Recall Notation~\ref{notation} for the constant $\Lambda$.

To proceed, simply notice that since $0\leq Z\leq 1$ (see Proposition~\ref{th1}; $Z \equiv Z^\e$ is the $C^1$-regularised variable as before), one has $\delta\leq X\leq 1+\delta$. This implies that $0 \leq 1-\frac{\delta}{X} \leq \frac{1}{1+\delta} <1$, and hence we have a bound uniform in $\delta$: 
\begin{align*}
0 \leq \left[X^{1/2}-\frac{\delta}{X^{1/2}}\right]^2 \leq {1+\delta} \leq 1.01
\end{align*}
whenever $\delta \in \left[0, 100^{-1}\right]$. Therefore, choosing $\eta'$ via
\begin{align*}
\eta =  \frac{13\eta' (M_0)^2}{8D}
\end{align*}
where $\eta$ is as in Eq.~\eqref{concB}, we arrive at the bound
\begin{align}\label{concC}
|I_3| \leq \eta  \int_{0}^{1}\frac{D}{v^2}X\left[\left(\log X\right)_{yy}\right]^2\,\mathrm{d}y + C_\eta\Lambda.
\end{align}
Here $\eta>0$ is arbitrary; the constants $C_\eta\sim\eta^{-1}$ and $\Lambda$ are both independent of $\delta$ and $\e$. 

Now, substituting the estimates~\eqref{concA}, \eqref{concB}, and \eqref{concC} into Eq.~\eqref{I1, I2, I3}, we deduce that
\begin{align*}
&\frac{\dd}{\dd t} \int_0^1\frac{X_y^2}{X}\,\mathrm{d}y + 2(1-\eta)\int_{0}^{1}\frac{D}{v^2}X\left[(\log X)_{yy}\right]^2\,\mathrm{d}y \leq  C_\eta \Lambda \left\{1+\int_0^1\frac{X_y^2}{X}\,\mathrm{d}y \right\}.
\end{align*} 
Fix any $0<\eta<1$ and hence $C_{\eta}$. Since $\Lambda$ is independent of both the ``shift'' and regularisation parameters $\delta$ and $\e$, we can now conclude the proof --- \emph{i.e.}, recover the $L^\infty_t L^2_y$-bound for $\frac{Z_y}{\sqrt{Z}}$ with $Z$ non-regularised --- from Gr\"{o}nwall's lemma.  \end{proof}

\section{Gradient estimate for reactant fraction on unbounded domains}\label{sec: gradient estimate on ubd}

In this section we prove Theorem~\ref{thm: main} for the unbounded domain case. For this purpose, we need the analogue of Theorem~\ref{thm: estimate on vy}, namely the uniform Lipschitz bound for $v$, on $\Omega = \R$. To avoid unnecessary repetitions with the bounded domain case, the parallel arguments are safely omitted. We shall only highlight the essential differences from the developments in \S\ref{sec: gradient estimate on bd}.

\begin{theorem}\label{thm: unbounded domain, vy bound}
Theorem~\ref{thm: estimate on vy} remains valid when $\Omega = \R$. That is, for any given $T>0$, we have $\left\|v_y\right\|_{L^\infty(Q_T)} \leq \Lambda$. The constant $\Lambda$ is as in Notation~\ref{notation}.
\end{theorem}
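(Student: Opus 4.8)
The plan is to transcribe the Kazhikhov representation-formula proof of Theorem~\ref{thm: estimate on vy} essentially line by line, the sole structural modification being the choice of base point around which the formula is anchored. As there, I would pass to the regularised system of \S\ref{subsec: reg} (with $\Phi_\e$ replacing $\phi$), suppress the superscripts ${}^\e$, and seek a bound $\|v_y\|_{L^\infty(Q_T)}\le\Lambda$ with $\Lambda$ independent of $\e$; the passage $\e\searrow0$ is then identical to that of \cite{li}. Since Proposition~\ref{th1} is stated for $\Omega=\R$ as well, I may use throughout the uniform a priori bounds $C_0^{-1}\le v,\theta\le C_0$, $|u|\le C_0$, $\sup_{t\in[0,T]}\|\theta_y(t,\cdot)\|_{L^2(\R)}\le C_0$ and $\int_0^T\|\theta_{yy}(\tau,\cdot)\|_{L^2(\R)}^2\,\dd\tau\le C_0$, together with $m_0\le v_0\le M_0$ and the assumed bound on $\|v_0\|_{W^{1,\infty}(\R)}$; in particular $v$, $\theta$ and $v_0$ are pinched between fixed positive constants at \emph{every} point of $\R$, which is what will make the base point easy to relocate.

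The point is the following. On $[0,1]$ the base point $y_\star(t)\in[0,1]$ automatically lies within distance $1$ of every $y$, and that is exactly what keeps the Cauchy--Schwarz estimate $|\int_{y_\star(t)}^{y}u|\le\|u\|_{L^2}|y-y_\star(t)|^{1/2}$ bounded. On $\R$ this fails for a globally chosen $y_\star$, so instead I would fix the target point $\bar y\in\R$ and anchor the formula at $y_\star:=\bar y+1$ (any point at a fixed distance from $\bar y$, distinct from $\bar y$, would serve); this is a legitimate base point precisely because $C_0^{-1}\le v(\tau,y_\star)\le C_0$ for all $\tau$ and $m_0\le v_0(y_\star)\le M_0$. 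With this $y_\star$ held fixed one reruns Kazhikhov's derivation verbatim — integrate $\partial_t\log v=u_y/v$ in time, integrate the resulting identity for $\partial_y[\,\cdot\,]$ in space from $y_\star$ to $y$, and apply the integrating-factor trick — to obtain the representation formula~\eqref{rep form} and hence the expression~\eqref{vy} for $v_y$, valid for every $y$; evaluating \eqref{vy} at $y=\bar y$ is then legitimate.

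From there the factors are controlled just as in the bounded case, with three routine adjustments. First, the exponent of $B(\tau,\bar y)$ is bounded by $|\bar y-y_\star|\big(\|u_0\|_{L^\infty(\R)}+\|u(\tau,\cdot)\|_{L^\infty(\R)}\big)\le\Lambda$, the decisive input being that $|\bar y-y_\star|=1$ is a fixed constant, so again $\Lambda^{-1}\le B(\tau,\bar y)\le\Lambda$ for all $\tau\in[0,t]$, and $|B_y(\tau,\bar y)|\le\Lambda$ because the identity for $B_y$ still involves no derivative of $u$ or $u_0$ (only $v_{0y}\in L^\infty(\R)$, $v_0\ge m_0$, and $|u_0|,|u|\le\Lambda$). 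Second, the bound $A(t)\le\Lambda$ comes from Gr\"onwall's inequality applied to \eqref{rep form} evaluated at the single point $\bar y$: on $[0,1]$ one integrated \eqref{rep form} over $(0,1)$ and used $\int v=1$, but on $\R$ evaluating at one point already gives $\Lambda^{-1}\le A(t)\le\Lambda+\Lambda\int_0^tA(\tau)\,\dd\tau$ since $v(t,\bar y)$ and $B(t,\bar y)$ are pinched between $\Lambda^{\pm1}$. Third, the term $\int_0^t\theta_y(\tau,\bar y)\,\dd\tau$, bounded on $[0,1]$ through the Neumann condition and the fundamental theorem of calculus, is estimated on $\R$ by $\int_0^t\|\theta_y(\tau,\cdot)\|_{L^\infty(\R)}\,\dd\tau$ and then by Agmon's inequality $\|\theta_y(\tau,\cdot)\|_{L^\infty(\R)}^2\le2\|\theta_y(\tau,\cdot)\|_{L^2(\R)}\|\theta_{yy}(\tau,\cdot)\|_{L^2(\R)}$ followed by H\"older in $\tau$, using the two $\theta$-bounds from Proposition~\ref{th1}. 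Substituting these into \eqref{vy}, the remaining factors being bounded exactly as in the bounded case, yields $|v_y(t,\bar y)|\le\Lambda$ uniformly in $(t,\bar y)\in Q_T$ and in $\e$, and sending $\e\searrow0$ completes the proof.

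I expect the only genuine obstacle to be the non-compactness of $\R$ as it enters Kazhikhov's formula: the exponential weight $\exp\{\tfrac1\mu\int_{y_\star}^{y}(u_0-u)\}$ and the quantity $\int_0^t\theta_y$ are bounded for free on $[0,1]$ but on $\R$ only after, respectively, relocating the base point to within a fixed distance of the target point and invoking the Agmon-type interpolation; every other estimate is an unaltered copy of the proof of Theorem~\ref{thm: estimate on vy}.
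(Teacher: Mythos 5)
Your proposal is correct, but it follows a genuinely different route from the paper. You keep the bounded-domain Kazhikhov representation \eqref{rep form}--\eqref{vy} verbatim and simply re-anchor it, for each target point $\bar y$, at the fixed point $y_\star=\bar y+1$; since Proposition~\ref{th1} already supplies the uniform pointwise bounds $C_0^{-1}\le v,\theta\le C_0$ and $|u|\le C_0$ on $\R$, any anchor at unit distance works, the quantities $A$, $B$, $B_y$ are controlled exactly as before (the lower bound on $A$ even comes for free from $A(t)\ge v_0(y_\star)\ge m_0$, so your Gr\"onwall step is optional), and the only new ingredient is your Agmon/Sobolev-plus-Cauchy--Schwarz-in-time bound for $\int_0^t|\theta_y(\tau,\bar y)|\,\dd\tau$, which correctly replaces the Neumann-condition argument of \eqref{new1}. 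The paper instead follows Jiang's localisation: it multiplies the momentum equation by the cutoff $\chi_k$, integrates over $[0,t]\times\,]y,\infty[$, and obtains on each unit interval $I_k$ a representation $v=QP+\tfrac{a}{\mu}\int_0^t\tfrac{QP}{Q(\tau)P(\tau)}\theta\,\dd\tau$, with $Q$ playing the role of your $B$ and $P(t)$ replacing $A(t)$, the bounds \eqref{P} on $P$ being imported from \cite{j1}; differentiating yields \eqref{vy, unbounded, rep formula}, and the $\theta_y$ term is handled through the $L^2_tH^2_x$ bound and Sobolev embedding, i.e.\ the same interpolation as your Agmon step. The trade-off: your argument is more self-contained for the fixed-$T$ statement (no exponential-decay estimates on $P$ are needed, only Proposition~\ref{th1} and $v_0\in W^{1,\infty}$) and makes the per-point nature of the estimate transparent, while the paper's cutoff formulation plugs directly into the existing machinery of \cite{j1,li}, whose decay bounds carry extra information beyond a fixed time horizon. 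Your treatment of the regularisation and the passage $\e\searrow0$ matches the paper's.
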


The proof is based on arguments by Jiang in \cite{j1}, which relies on the localisation lemma \emph{\`{a} la} Kazhikhov \cite{ks2}. See also Li \cite{li}.

\begin{proof}[Proof of Theorem~\ref{thm: unbounded domain, vy bound}]
   Throughout the proof we denote $I_k:=[k, k+1]$ for each $k \in \mathbb{Z}$.

   Consider the cutoff function $\chi=\chi_k\in W^{1,\infty}(\mathbb{R})$ defined by
   $$\chi(y)=\left\{\begin{aligned}
   	&1\qquad \text{for } y\leq k,\\
   	&k+1-y\qquad\text{for }  k\leq y\leq k+1,\\
   	&0\qquad\text{for }  y\geq k+1.
   \end{aligned}
   \right.$$
Multiply the equation $$u_t+\left(\frac{a\theta}{v}\right)_y=\left(\frac{\mu u_y}{v}\right)_y$$ by $\chi$ to obtain that
   $$\chi(y)u_t(t,y)=\left[\left(\frac{\mu u_y}{v}-\frac{a\theta}{v}\right)(t,y)\chi(y)\right]_y-\chi'(y)\left(\frac{\mu u_y}{v}-\frac{a\theta}{v}\right)(t,y).$$
 Then, integrating the above equality over $[0,t]\times ]y,\infty[$ and taking exponential, we deduce that
	$$\begin{aligned}
			v(t,y)\exp\left\{-\frac{a}{\mu}\int_{0}^t\frac{\theta}{v}(\tau,y)\mathrm{d}\tau\right\} = v_0(y)P(t)Q(t,y),
		\end{aligned}$$
where
	$$P(t):=\exp\left\{\frac{1}{\mu}\int_{0}^{t}\int_{k}^{k+1}\left(\frac{\mu u_y}{v}-\frac{a\theta}{v}\right)(\tau,\xi)\mathrm{d}\tau\mathrm{d}\xi\right\},$$
	$$Q(t,y):=v_0(y)\exp\left\{\frac{1}{\mu}\int_{y}^{\infty} [u_0(\xi)-u(t,\xi)]\chi(\xi)\mathrm{d}\xi\right\}.$$

One may infer the following from the above identity:
	\begin{align*}
	v(t,y)=Q(t,y)P(t)+\frac{a}{\mu}\int_0^t\frac{Q(t,y)P(t)}{Q(\tau,y)P(\tau)}\theta(\tau,y)\,\mathrm{d}\tau\qquad\text{for any } y\in I_k,\,t\geq 0.
	\end{align*}
Moreover, the bounds below are established by Jiang \cite{j1}:
	\begin{equation}
		\label{P}
		0\leq P(t)\leq \Lambda\exp\left\{-t/\Lambda\right\}\quad\text{ and }\quad \frac{P(t)}{P(\tau)}\leq \Lambda\exp\left\{-(t-\tau)/\Lambda\right\}.
	\end{equation}

Now, straightforward computation yields that
\begin{align}\label{vy, unbounded, rep formula}
v_y(t,y) &= Q_y(t,y)P(t)+\frac{a}{\mu}\int_0^t\frac{P(t)}{P(\tau)}\frac{Q_y(t,y)\theta(\tau,y)+Q(t,y)\theta_y(\tau,y)}{Q(\tau,y)}\,\mathrm{d}\tau\nonumber\\
&\qquad -\frac{a}{\mu}\int_0^t\frac{P(t)}{P(\tau)}\frac{Q(t,y)\theta(\tau,y)Q_y(\tau,y)}{Q^2(\tau,y)}\,\mathrm{d}\tau,
\end{align}
where
\begin{align*}
Q_y(t,y) &= v_{0y}(y)\exp\left\{\frac{1}{\mu}\int_{y}^{\infty} [u_0(\xi)-u(t,\xi)]\chi(\xi)\,\mathrm{d}\xi\right\}\\
&\qquad +v_0(y)\left[\frac{1}{\mu}[u(t,y)-u_0(y)](k+1-y)\right]\exp\left\{\frac{1}{\mu}\int_{y}^{\infty} [u_0(\xi)-u(t,\xi)]\chi(\xi)\,\mathrm{d}\xi\right\}.
\end{align*}
The above expressions are well defined, for it is assumed that $v_0 \in H^1(\R)\cap W^{1,\infty}(\R)$, as well as that $u(t,\bullet) \in H^1(\R)$ for every $t\in[0,T]$.

We are ready to conclude. Indeed, Eq.~\eqref{P} gives uniform bounds on $P(t)$ and $P(t)\slash P(\tau)$. Also, in view of the expressions for $Q$ and $Q_y$ above, the $L^\infty_{t,x}$-bound for $(u,\theta)$ and the $L^2_t H^2_x$-bound (hence the $L^2_t C^{0,1/2}_x$-bound by Sobolev--Morrey embedding) for $\theta$ --- proved in Proposition~\ref{th1} --- as well as the assumptions on $v_0$, also imply that $|Q_y| \leq \Lambda$ and $0<\Lambda^{-1} \leq Q \leq \Lambda < \infty$ uniformly in spacetime. The constants $\Lambda$ follow the same rule of Notation~\ref{notation}. Most importantly, all these above bounds  are uniform in $k$. Substituting these bounds into the representation formula~\eqref{vy, unbounded, rep formula} for $v_y$, we  complete the proof.    
\end{proof}

\begin{proof}[Sketch of proof of Theorem~\ref{thm: main} for unbounded domain]

The arguments in \S\ref{sec: gradient estimate on bd} almost carry over verbatim, with the understanding of the boundary condition for $Z$ as $\lim\limits_{|y|\rightarrow \infty} Z_y(t,y)=0$.  Indeed, consider $X=Z^\e+\delta$ as in \S\ref{sec: gradient estimate on bd}. The following holds for the case $\Omega = \R$ in place of Eq.~\eqref{I1, I2, I3}:
\begin{align*}
\frac{\mathrm{d}}{\mathrm{d}t}\int_{-\infty}^{\infty}\frac{X_y^2}{X}\,\mathrm{d}y &= 8\left(\sqrt{X}\right)_{yy}\left(\sqrt{X}\right)_y \frac{D}{v^2}\bigg|_{-\infty}^{\infty}-4\left(\sqrt{X}\right)_yK\Phi(\theta)\sqrt{X}\bigg|_{-\infty}^{\infty}+8\left(\sqrt{X}\right)_y^2\left(\frac{D}{v^2}\right)_y\bigg|_{-\infty}^{\infty}\\
		&-2\int_{-\infty}^{\infty}X\frac{D}{v^2}\left[\left(\log X\right)_{yy}\right]^2\mathrm{d}y-2\int_{-\infty}^{\infty}\left(\frac{D}{v^2}\right)_yX_y\left(\log X\right)_{yy}\,\mathrm{d}y\\
		&+4\int_{-\infty}^{\infty}\left(\sqrt{X}\right)_{yy}K\Phi(\theta)\sqrt{X}\,\mathrm{d}y.
\end{align*}
This identity holds pointwise in $t$. By virtue of the boundary conditions, the first three terms on the right-hand side are equal to zero. The control for the remaining terms is exactly the same as for the case $\Omega = [0,1]$, which relies essentially on Theorems~\ref{thm: unbounded domain, vy bound} and Lemma~\ref{th1}. \end{proof} 

\bigskip

\noindent
{\bf Acknowledgement}.
The research of SL is supported by NSFC (National Natural Science Foundation of China) Project $\#$12201399, the SJTU-UCL joint seed fund WH610160507/067, and the Shanghai Frontier Research Institute for Modern Analysis. This paper has been written during SL's visiting scholarship at New York University-Shanghai; SL is grateful to NYUSH for providing very nice working atmosphere.


\begin{thebibliography}{99}

\bibitem{bgl}
D. Bakry, I. Gentil, and M. Ledoux, \textit{Analysis and geometry of Markov diffusion operators}. Grundlehren der mathematischen Wissenschaften [Fundamental Principles of Mathematical Sciences], 348. Springer, Cham, 2014. xx+552 pp.

\bibitem{method2}
P.~M. Bies and T. Cie\'{s}lak, Global-in-time regular unique solutions with positive temperature to the 1d thermoelasticity, ArXiv preprint (2023): 2303.12522.

\bibitem{chen}
G.-Q. Chen, Global solutions to the compressible Navier--Stokes equations for a reacting mixture. \textit{SIAM J. Math. Anal.} \textbf{23} (1992), 609--634.

\bibitem{ck}
G.-Q. Chen and M. Kratka, Global solutions to the Navier--Stokes equations for compressible heat-conducting flow with
symmetry and free boundary, \textit{Commun. PDE} \textbf{27} (2002), 907--943.

\bibitem{cht}
G.-Q. Chen, D. Hoff, and K. Trivisa, Global solutions to a model for exothermically reacting, compressible flows with
large discontinuous initial data, \textit{Arch. Ration. Mech. Anal.} \textbf{166} (2003), 321--358.

\bibitem{cd}
G.-Q. Chen and D.~H. Wagner, Global entropy solutions to exothermically reacting, compressible Euler equations, \textit{J. Differ. Equ.} \textbf{191} (2003), 277--322.

\bibitem{method1}
T. Cie\'{s}lak, M. Fuest, K. Hajduk, and M. Sier\.{z}\c{e}ga, On existence of global solutions for the 3D chemorepulsion system, ArXiv preprint (2023): 2303.09620.

\bibitem{cmr}
P. Collela, A. Majda, and V. Roytburd, Theoretical and numerical structure for reacting shock waves, \textit{SIAM J. Sci. Statist. Comput.} \textbf{7} (1986),  1059--1080.

\bibitem{dz}
B. Documet and A. Zlotnik, Lyapunov functional method for 1D radiative and reactive viscous gas dynamics, \textit{Arch. Ration. Mech. Anal.} \textbf{177} (2005), 185--229.

\bibitem{dt1}
D. Donatelli and K. Trivisa, On the motion of a viscous compressible radiative-reacting gas, \textit{Comm. Math. Phys.} \textbf{265} (2006), 463--491.

\bibitem{dt2}
D. Donatelli and K. Trivisa, A multidimensional model for the combustion of compressible fluids, \textit{Arch. Ration. Mech. Anal.} \textbf{185} (2007), 379--408.

\bibitem{cit9}
Z. Feng, M. Zhang, and C. Zhu, Nonlinear stability of composite waves for one-dimensional compressible Navier-Stokes equations for a reacting mixture, \textit{Commun. Math. Sci.} \textbf{18} (2020), 1977--2004. 

\bibitem{cit8}
Z. Feng, G. Hong, and C. Zhu, Optimal time decay of the compressible Navier-Stokes equations for a reacting mixture,  \textit{Nonlinearity} \textbf{34} (2021),  5955--5978. 

\bibitem{gli}
J. Glimm, The continuous structure of discontinuities, \textit{Lecture Notes in Phys.} \textbf{344} (1989), 177--186.

\bibitem{j1}
S. Jiang, Large-time behavior of solutions to the equations of a one-dimensional viscous polytropic ideal gas in
unbounded domains, \textit{Comm. Math. Phys.} \textbf{200} (1999), 181--193.

\bibitem{j2}
S. Jiang, Remarks on the asymptotic behaviour of solutions to the compressible Navier-–Stokes equations in the half-line, \textit{Proc. R. Soc. Edinb. Sect. A}, \textbf{132} (2002), 627--638.

\bibitem{Kan}
Y.~I. Kanel’, On a model system of equations of one-dimensional gas motion, \textit{Differ. Equ.} \textbf{4} (1968), 374--380.

\bibitem{ks}
A.~V. Kazhikhov and V.~V. Shelukhin, Unique global solution with respect to time of initial-boundary value problems for
one-dimensional equations of a viscous gas, (PMM vol. 41, n=2, 1977, pp. 282--291).  \textit{J. Appl. Math. Mech.} \textbf{41} (1977), 273--282.

\bibitem{ks2}
A.~V. Kazhikhov, On the theory of boundary value problems for equations of the
one-dimensional time dependent motion of a viscous
heat-conducting gas, \textit{Dinamika Sploshn. Sredy} \textbf{50}  (1981), 37--62.
\bibitem{kaz}
A.~V. Kazhikhov, Cauchy problem for viscous gas equations, \textit{Siber. Math. J.} \textbf{23}  (1982), 44--49.

\bibitem{jap2}
S. Kawashima and W. Nishida, The initial-value problems for the equations of viscous compressible and perfect compressible gas, \textit{Nonlinear Funct. Anal.} (1981), 34--59.

\bibitem{li}
S. Li, On one-dimensional compressible Navier--Stokes equations for a reacting mixture in unbounded domains, \textit{Z. Angew. Math. Phys.} \textbf{68} (2017), no. 5, Paper No. 106, 24 pp.

\bibitem{ll16}
J. Li, and Z. Liang,  Some uniform estimates and large-time behavior of solutions to one-dimensional compressible Navier--Stokes system in unbounded domains with large data, \textit{Arch. Ration. Mech. Anal.} \textbf{220} (2016), 1195--1208.


\bibitem{cit13}
Y. Liao, T. Wang, and H. Zhao, Global spherically symmetric flows for a viscous radiative and reactive gas in an exterior domain, \textit{J. Differ. Equ.}  \textbf{266} (2019), 6459--6506.  

\bibitem{maj}
A. Majda, High Mach number combustion, \textit{Lecture Notes in Appl. Math.} \textbf{24} (1986), 109--184.

\bibitem{jap3}
A. Matsumura and T. Nishida, The initial-value problem for the equations of motion of viscous and heat-conductive gas, \textit{J. Math. Kyoto Univ.} \textbf{20} (1980), 67--104.

\bibitem{jap4}
A. Matsumura and T. Nishida, Initial boundary value problems for the equations of motion of compressible viscous and
heat-conductive fluids, \textit{Comm. Math. Phys.} \textbf{89} (1983), 445--464.

\bibitem{cit5}
N. Meng, Asymptotic stability of the stationary solution to an outflow problem for the compressible Navier--Stokes equations for a reacting mixture, \textit{Discrete Contin. Dyn. Syst. Ser. B} \textbf{28} (2023), 4424--4441. 

\bibitem{jap1}
M. Okada and S. Kawashima, On the equations of one-dimensional motion of compressible
viscous fluids, \textit{J. Math. Kyoto Univ.} \textbf{23} (1983), 55--71.




\bibitem{cit10}
L. Peng, Asymptotic stability of a viscous contact wave for the one-dimensional compressible Navier--Stokes equations for a reacting mixture, \textit{Acta Math. Sci. Ser. B (Engl. Ed.)} \textbf{40} (2020),  1195--1214. 

\bibitem{cit1}
L. Peng and Y. Li, Decay rate to contact discontinuities for the one-dimensional compressible Navier--Stokes equations with a reacting mixture, \textit{J. Math. Phys.} \textbf{64} (2023), no. 6, Paper No. 061503, 22 pp.

\bibitem{cit2}
L. Peng and Y. Li, Decay rate to contact discontinuities for the one-dimensional compressible Euler--Fourier system with a reacting mixture, \textit{Commun. Pure Appl. Anal.} \textbf{22} (2023), 1721--1744. 

\bibitem{cit6}
L. Peng and Y. Li, Asymptotic stability of a composite wave to the one-dimensional compressible Navier--Stokes equations for a reacting mixture. \textit{Discrete Contin. Dyn. Syst. Ser. B} \textbf{28} (2023),  4399--4423. 


\bibitem{wil}
F.~A. Williams, \textit{Combustion Theory}, Addison-Wesley, Reading, MA, 1965.

\bibitem{wang}
D. Wang, Global solutions for the mixture of real compressible reacting flows in combustion, \textit{Commun. Pure Appl. Anal.} \textbf{3}  (2004), 775--790.


\bibitem{wag}
D.~H. Wagner, Equivalence of the Euler and Lagrangian equations of gas dynamics for weak solutions, \textit{J. Differ. Equ.} \textbf{68} (1987), 118--136.

\bibitem{cit7}
W. Wang and H. Wen, Global well-posedness and time-decay estimates for compressible Navier--Stokes equations with reaction diffusion, \textit{Sci. China Math.} \textbf{65} (2022), 1199--1228.

\bibitem{cit4}
W. Wang and Z. Wu, Pointwise space-time estimates for compressible Navier-Stokes equations for a reacting mixture, \textit{ZAMM Z. Angew. Math. Mech.} \textbf{103} (2023), no. 2, Paper No. e202100463, 16 pp.

\bibitem{winkler}
M. Winkler, Global large-data solutions in a chemotaxis--Navier--Stokes system modeling cellular swimming in fluid drops, \textit{Comm. Partial Differential Equations}, 37 (2012), 319-351. 

\bibitem{cit12}
Z. Xu and Z. Feng, Nonlinear stability of rarefaction waves for one-dimensional compressible Navier--Stokes equations for a reacting mixture, \textit{Z. Angew. Math. Phys.} \textbf{70} (2019), no. 5, Paper No. 155, 21 pp.

\bibitem{cit3}
H. Yin and C. Zhu, The outflow problem for the radiative and reactive gas: existence, stability and convergence rate, \textit{Nonlinearity} \textbf{36} (2023),  2435--2472. 35B40 (35B35 76N15)

\bibitem{cit11}
B. Zhu, Global symmetric classical solutions for radiative compressible Navier--Stokes equations with temperature-dependent viscosity coefficients, \textit{Math. Methods Appl. Sci.} \textbf{43} (2020), 4428--4459.












\end{thebibliography}
\end{document}